\def\scr{\mathcal}
\def\ditemfirst#1#2{\begin{enumerate}\item \label{#1} #2\suspend{enumerate}}
\def\hR{{}^*\mathbb{R}}
\newtheorem{lem}{Lemma}
\newtheorem{prp}{Proposition}
\newtheorem{thm}{Theorem}
\newtheorem{cor}{Corollary}
\def\St{\operatorname{St}}
\def\Dom{\operatorname{Dom}}
\def\Range{\operatorname{Range}}
\def\<{\langle}
\def\>{\rangle}
\def\Z{\mathbb Z}
\def\R{\mathbb R}
\def\star#1{{}^*#1}
\def\hR{\star{\R}}
\def\Power{\mathcal P}
\title[Invariance of Probabilities]{Non-Classical Probabilities Invariant Under Symmetries [corrected]}
\author{Alexander R. Pruss}
\keywords{probability;conditional probability;invariance;hyperreals;symmetry;regularity}
\begin{document}
\sloppy
\begin{abstract}
Classical real-valued probabilities come at a philosophical cost: in many infinite situations, they assign
the same probability value---namely, zero---to cases that are impossible as well as to cases that are possible. There are three
non-classical approaches to probability that can avoid this drawback: full conditional probabilities, qualitative probabilities
and hyperreal probabilities. These approaches have been criticized for failing to preserve intuitive symmetries that can
be preserved by the classical probability framework, but there has not been a systematic study of the conditions under which these
symmetries can and cannot be preserved. This paper fills that gap by giving complete characterizations under which symmetries
understood in a certain ``strong'' way can be preserved by these non-classical probabilities, as well as by offering some
results to make it plausible that the strong notion of symmetry here is the right one. Philosophical implications are briefly
discussed, but the main purpose of the paper is to offer technical results to help make further philosophical discussion
more sophisticated.

\textbf{This is a corrected version of a paper published in \textit{Synthese} and does not correspond to any published version.
The original version had an erroneous proof of what it called Lemma~2 (Lemma~3 in this version).}
\end{abstract}

\maketitle
\section{Introduction}
Bayesian epistemologists are attracted to the regularity condition on probability that says that every nonempty event
has non-zero probability. This condition should capture, for instance, the distinction between the possibility of a
sequence of fair coins all showing heads and the impossibility the coins in the sequence showing a square circle.
As is well-known, the regularity condition cannot be satisfied in classical probability theory when there are
uncountably many pairwise disjoint events~\cite{Hajek03}. However, multiple attempts have been made to replace the classical probabilistic
framework with one that allows for regularity or something like it. The three most prominent such frameworks are: (1)~full
conditional probabilities, (2)~qualitative or comparative probabilities and (3)~non-Archimedean or hyperreal probabilities.

These approaches have been criticized in the philosophical literature for failing to preserve intuitive symmetries of
probabilistic situations (e.g., \cite{Pruss13, PrussPopper, Williamson07}; there has also been a significant discussion of
these criticisms, e.g., \cite{BHW18, Howson17, Parker19}).
For instance, it is impossible for a rotation-invariant finitely-additive hyperreal probability
defined for all subsets of the unit circle to satisfy regularity for the reason
that if $x$ is any point on the circle and $\rho$ is a rotation by an irrational number of degrees,
then rotational invariance would require the
countable set $A = \{ x,\rho x,\rho^2x,... \}$ to have the same probability as $\rho A$,
but $\rho A=\{ \rho x, \rho^2 x, \rho^3 x, ... \}$ is a proper subset of $A$ and hence regularity would be
violated (cf.\ \cite{Blumenthal40, BW69, Pruss13}). This counterintuitive phenomenon of a set $A$ that can be rotated
to form a proper subset $\rho A$ is a junior relative of the Banach-Tarski paradox where a ball can be partitioned into
a finite number of pieces that can be reassembled into two balls.

But there is an interesting technical question that has not received much exploration in the qualitative and hyperreal
cases: under what exact conditions do there exist regular non-classical probabilities that are invariant under some group
of symmetries? The point of this paper is to give a complete answer to this question in the case of certain ``strong''
notions of invariance for all three main non-classical types of probability. The answer in all cases will involve the non-existence
of relatives of the Banach-Tarski paradox and of the above rotational paradox.

We will be primarily interested in probabilities defined for all subsets of
a space $\Omega$, but consider invariance with respect to a group $G$ of symmetries that act on a larger space $\Omega^*$. Thus, each member of $G$ is
a bijection of $\Omega^*$ onto itself and the product $gh$ of members of $G$ is the composition of $g$ with $h$. This will let us model such cases as probabilities on
the interval $[0,1]$ that are invariant under translations by letting $\Omega$ be $[0,1]$ and $\Omega^*$ be the
real line $\R$, even though a translation can move a point in $[0,1]$ outside of that interval. As we will see in
Section~\ref{sec:partial}, this approach of enlarging  $\Omega$ is equivalent to talking about invariance under what are
called ``partial group actions'', but makes for a more intuitive presentation.

In the case of full conditional probabilities (or Popper Functions), the literature
contains the ingredients for a complete answer in the case of a strong invariance assumption.
For the sake of completeness we will put together the pieces of this answer in Section~\ref{sec:Popper}, but the main purpose
of this paper is to prove analogous answers for the qualitative and hyperreal cases.

The primary result of this paper will be given in Section~\ref{sec:strong}, and it will be a complete
characterization of when there exists a regular hyperreal measure or a regular strongly
invariant qualitative probability on the powerset of a space $\Omega$ whose superset $\Omega^*$ is
acted on by a group $G$.
Somewhat surprisingly in light of the fact that there are qualitative probabilities that do not arise
from hyperreal ones, it will turn out that the conditions for hyperreal measures and qualitative probabilities
are the same.

In Section~\ref{sec:weak}, we will consider a weaker notion of invariance and argue that it fails to do justice
to our symmetry intuitions. We will end with a speculative discussion of some philosophical issues.

This paper will assume the Axiom of Choice unless explicitly otherwise noted.

\section{Full conditional probabilities}\label{sec:Popper}
A full conditional probability on a space $\Omega$ is a function $P$ from pairs $(A,B)$ of subsets of
$\Omega$ with $B\ne\varnothing$ (the Popper function literature also allows for $B=\varnothing$ with
the trivializing definition $P(A\mid\varnothing)=1$ for all $A$)  from an algebra $\scr F$
to $\R$ satisfying these axioms:
\begin{enumerate}
\item[(C1)] $P(\cdot \mid B)$ is a finitely additive probability function
\item[(C2)] $P(A\cap B\mid C) = P(A\mid C)P(B\mid A\cap C)$.\footnote{The original
version of this paper also included the condition that if $P(A\mid B)=P(B\mid A)=1$, then $P(C\mid A)=P(C\mid B)$, but that follows
from (C1) and (C2).}
\end{enumerate}

Although if we define $P(A)=P(A\mid \Omega)$, the unconditional probability function $P(\cdot)$ in many cases of application will
still fail to be regular, full conditional probabilities can be seen as solving the problem of regularity in two ways.
First, the main
difficulty with lack of regularity is the difficulty in conditionalizing on possible (i.e., non-empty) null-probability events. Full
conditional probabilities allow one to conditionalize on any event other than $\varnothing$. Second, intuitively,
a possible event is more likely than $\varnothing$. Full conditional probabilities capture this intuition by allowing
one to compare the probability of events $A$ and $B$ not by simply comparing $P(A)$ against $P(B)$, but by ``zooming in'' to
the relevant area of probability space and comparing $P(A\mid A\cup B)$ against $P(B\mid A\cup B)$.

The function $P$ is \textit{strongly invariant} with respect to a group $G$ of symmetries on $\Omega^*\supseteq\Omega$
provided that $P(gA\mid B) = P(A\mid B)$ whenever $g\in G$ while $A$ and $gA$ are both subsets of $B$ and in $\scr F$.

Say that two subsets $A$ and $B$ in an algebra $\scr F$ of subsets of $\Omega$ are \textit{$G$-equidecomposable} with respect
to $\scr F$ provided that there is a finite partition $A_1,...,A_n$ of $A$ (i.e., the $A_i$ are pairwise disjoint and their union is $A$) and a
sequence $g_1,...,g_n$ in $G$ such that $g_1 A_1,...,g_n A_n$ is a partition of $B$, and where $A_1,...,A_n,g_1 A_1,...,g_n A_n$ are in
$\scr F$.
If $\scr F$ is the powerset of $\Omega$, we will drop ``with respect to $\scr F$''.

Say that a subset $E$ of $\Omega$ is \textit{$G$-paradoxical} provided it has disjoint subsets $A$ and $B$
each of which is $G$-equidecomposable with $E$ (with respect to the powerset algebra). The famous Banach-Tarski paradox says that if $\Omega$
is three-dimensional Euclidean space and $G$ is all rigid motions, then any solid ball can be partitioned
into two subsets, each of which is $G$-equidecomposable with a solid ball of the same radius as the original.
Thus, any solid ball is $G$-paradoxical.

The following follows by connecting the dots among known results and methods.

\begin{thm}\label{th:full-cond}
    Let $G$ act on $\Omega^*\supseteq \Omega$. The following are equivalent, where all the probabilities are defined with respect to the powerset algebra:
    \begin{itemize}
    \item[(i)] There is a strong $G$-invariant full conditional probability on $\Omega$
    \item[(ii)] $\Omega$ has no nonempty $G$-paradoxical subset
    \item[(iii)] For every nonempty subset $E$ of $\Omega$, there is a $G$-invariant finitely additive real-valued unconditional probability on $E$
    \item[(iv)] For every countable nonempty subset $E$ of $\Omega$, there is a $G$-invariant finitely additive real-valued unconditional probability on $E$
    \item[(v)] $\Omega$ has no nonempty countable $G$-paradoxical subset.
    \end{itemize}
\end{thm}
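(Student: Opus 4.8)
The plan is to establish the listed equivalences via the cycle \mbox{(i)$\,\Rightarrow\,$(iii)$\,\Rightarrow\,$(iv)$\,\Leftrightarrow\,$(v)$\,\Rightarrow\,$(ii)$\,\Rightarrow\,$(iii)} together with the separate implication (iii)$\,\Rightarrow\,$(i). The one external ingredient I would invoke is the finitely additive form of Tarski's theorem: for a group $G$ acting on a set $X$ and a subset $E\subseteq X$, there is a finitely additive $G$-invariant probability on $\mathcal{P}(E)$ (invariance meaning $\mu(gA)=\mu(A)$ whenever $g\in G$ and $A,gA\subseteq E$) if and only if $E$ is not $G$-paradoxical; the ``only if'' is the elementary observation that a paradoxical decomposition forces $2\le 1$ on such a $\mu$, and the ``if'' is Tarski's theorem applied with $X=\Omega^*$, followed by restriction to $\mathcal{P}(E)$. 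Granting this, (ii)$\,\Leftrightarrow\,$(iii) and (iv)$\,\Leftrightarrow\,$(v) are nothing but this biconditional quantified over all nonempty, respectively all countable nonempty, subsets $E$ of $\Omega$ (and the ambiguity about whether the pieces of an equidecomposition are taken from $\mathcal{P}(\Omega)$ or $\mathcal{P}(\Omega^*)$ is harmless, since the images of the pieces are forced to lie in the target set anyway), while (iii)$\,\Rightarrow\,$(iv) is trivial. For (i)$\,\Rightarrow\,$(iii), given a strongly $G$-invariant full conditional probability $P$ and a nonempty $E\subseteq\Omega$, I would put $\mu_E(A)=P(A\mid E)$ for $A\subseteq E$: axiom (C1) makes $\mu_E$ a finitely additive probability on $\mathcal{P}(E)$, and strong invariance applied with conditioning set $E$ gives $\mu_E(gA)=P(gA\mid E)=P(A\mid E)=\mu_E(A)$ whenever $A,gA\subseteq E$ (the requirement ``in $\scr F$'' is vacuous here, as the algebra is the full powerset).

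The first substantive step is (v)$\,\Rightarrow\,$(ii), which I would prove contrapositively by showing that any nonempty $G$-paradoxical $E\subseteq\Omega$ contains a \emph{countable} nonempty $G$-paradoxical subset. Fix witnesses: disjoint $A,B\subseteq E$, a partition $A=\bigsqcup_{i=1}^{n}A_i$ with elements $g_1,\dots,g_n\in G$ such that $\bigsqcup_{i=1}^{n}g_iA_i=E$, and a partition $B=\bigsqcup_{j=1}^{m}B_j$ with elements $h_1,\dots,h_m\in G$ such that $\bigsqcup_{j=1}^{m}h_jB_j=E$. These data define bijections $\phi\colon E\to A$ and $\psi\colon E\to B$, where $\phi$ agrees with $g_i^{-1}$ on $g_iA_i$ and $\psi$ agrees with $h_j^{-1}$ on $h_jB_j$; note that $\phi$, $\psi$ and their inverses all carry subsets of $E$ into $E$. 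Let $E'$ be the closure of a single point of $E$ under the four (partial) maps $\phi$, $\psi$, $\phi^{-1}$, $\psi^{-1}$; then $E'$ is countable, nonempty and contained in $E$. Writing $A'=A\cap E'$ and $B'=B\cap E'$, closure forces $\phi$ to restrict to a bijection $E'\to A'$ and $\psi$ to a bijection $E'\to B'$, so the partitions $\{g_iA_i\cap E'\}_{i=1}^{n}$ (with elements $g_i^{-1}$) and $\{h_jB_j\cap E'\}_{j=1}^{m}$ (with elements $h_j^{-1}$) witness that $E'$ is $G$-equidecomposable with $A'$ and with $B'$; since $A'$ and $B'$ are disjoint subsets of $E'$, the set $E'$ is $G$-paradoxical, contradicting (v).

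The crux is (iii)$\,\Rightarrow\,$(i), which I would carry out by a transfinite construction in the spirit of the layered representation of coherent conditional probabilities. Build a decreasing transfinite sequence $\Omega=C_0\supseteq C_1\supseteq\cdots$ in which each $C_\alpha$ has the form $C_\alpha=\Omega\cap D_\alpha$ with $D_\alpha\subseteq\Omega^*$ a \emph{$G$-invariant} set, and at each stage where $C_\alpha\ne\varnothing$ let $\mu_\alpha$ be a $G$-invariant finitely additive probability on $C_\alpha$ supplied by (iii); arrange moreover that every nonempty $B\subseteq\Omega$ is \emph{seen} somewhere, that is, $\mu_\alpha(B\cap C_\alpha)>0$ for some $\alpha$. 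Letting $\alpha(B)$ be the least such $\alpha$, define
\[
P(A\mid B)=\frac{\mu_{\alpha(B)}(A\cap B\cap C_{\alpha(B)})}{\mu_{\alpha(B)}(B\cap C_{\alpha(B)})}.
\]
Axiom (C1) is immediate. Axiom (C2) follows from a short case split on whether $\mu_{\alpha(C)}(A\cap C\cap C_{\alpha(C)})$ is zero---in which case both sides vanish by monotonicity---or positive---in which case $\alpha(A\cap C)=\alpha(C)$ and the two ratios on the right telescope into the left. Finally, strong $G$-invariance is exactly what the shape $C_\alpha=\Omega\cap D_\alpha$ buys: if $A,gA\subseteq B\subseteq\Omega$ then, using that $D_\alpha$ is $G$-invariant together with $A\subseteq\Omega$ and $gA\subseteq\Omega$, one checks $g(A\cap C_\alpha)=gA\cap C_\alpha$, so the $E$-local invariance of $\mu_{\alpha(B)}$ yields $\mu_{\alpha(B)}(gA\cap C_{\alpha(B)})=\mu_{\alpha(B)}(A\cap C_{\alpha(B)})$, and hence $P(gA\mid B)=P(A\mid B)$ since $A$ and $gA$ are subsets of $B$.

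The main obstacle is the construction of such a tower that is simultaneously decreasing and \emph{exhausting}: the obvious recipe---pass from $C_\alpha$ to the complement of the ``support'' of $\mu_\alpha$---breaks down because a finitely additive measure need not have a set-theoretic support, and a non-atomic $\mu_\alpha$ would leave singletons unseen forever. This is the point at which the argument leans on the existing invariant-measure and Popper-function literature; the essential resource is again hypothesis (iii), which guarantees an invariant measure on \emph{every} nonempty subset of $\Omega$ to which one might need to descend. Once the tower is in place, the loop closes: (i)$\,\Leftrightarrow\,$(iii)$\,\Leftrightarrow\,$(ii) and (iii)$\,\Rightarrow\,$(iv)$\,\Leftrightarrow\,$(v)$\,\Rightarrow\,$(ii), so all five statements are equivalent.
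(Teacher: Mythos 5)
Your treatment of the peripheral implications is essentially the paper's: (i)$\Rightarrow$(iii) by conditioning on $E$, (ii)$\Leftrightarrow$(iii) and (iv)$\Leftrightarrow$(v) by Tarski's theorem on $\Omega^*$ followed by restriction, and the reduction of a nonempty paradoxical set to a countable one by closing a point under the finitely many group elements witnessing the decomposition (the paper intersects with the orbit of the countable subgroup they generate; your closure under the induced bijections $\phi,\psi,\phi^{-1},\psi^{-1}$ is a leaner version of the same idea and is correct).

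The gap is at the crux, (iii)$\Rightarrow$(i), and you have named it yourself without closing it: the decreasing, exhausting, $G$-invariant transfinite tower $C_0\supseteq C_1\supseteq\cdots$ is never constructed, and the obstacle you point to (a finitely additive measure has no set-theoretic support, so there is no canonical ``unseen'' remainder to descend to) is genuine, not a detail to be outsourced to the literature. The paper circumvents it with two moves that your outline lacks. First, it proves the existence of an invariant full conditional probability on each \emph{finite} algebra $\scr F$: starting from an invariant probability it repeatedly passes to a $[0,\infty]$-valued invariant measure concentrated on $E_{n+1}=\bigcup\{B\in\scr F:\mu_n(B)=0\}$, which is itself a member of $\scr F$ and is $\mu_n$-null precisely \emph{because $\scr F$ is finite}; the resulting R\'enyi chain $\mu_N\prec\cdots\prec\mu_1$ terminates for the same reason, and one sets $P(A\mid B)=\mu_{n(B)}(A\cap B)/\mu_{n(B)}(B)$. (Note that this step needs, for each nonempty $E$, a $G$-invariant $[0,\infty]$-valued measure on all of $\Power\Omega$ with $\mu(E)=1$, which is what Tarski's theorem supplies from (ii), not merely a probability on $\Power E$.) Second, it passes from finite algebras to $\Power\Omega$ not by exhaustion but by compactness: the finite algebras form a directed set, the associated conditional probabilities live in the compact product $[0,1]^{\Power\Omega\times(\Power\Omega-\{\varnothing\})}$, and a convergent subnet yields a $G$-invariant full conditional probability on the whole powerset, since (C1), (C2) and strong invariance are each conditions on finitely many sets at a time and hence survive the limit. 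Without the finite-algebra localization and the subnet limit (or some equivalent substitute), your argument does not establish (iii)$\Rightarrow$(i), and the equivalence cycle does not close.
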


\textbf{Remark:} The Axiom of Choice is not needed for $\text{(i)}\rightarrow\text{(iii)}\rightarrow{\text{(iv)}}\rightarrow\text{(v)}\rightarrow\text{(ii)}$.

Clearly, (i) implies (iii) and (iv), since $P(\cdot \mid  E)$ will be a $G$-invariant finitely additive real-valued probability on $E$. But a full
conditional probability needs to satisfy the additional coherence constraint (C2) besides defining a probability on every subset. It is interesting
to note that there is no additional difficulty about finding a $G$-invariant conditional probability that satisfies these conditions.
This result may thus be somewhat relevant to those who wish to define weaker conditional probability concepts without conditions like (C2)
(for one approach like this, see \cite{Meehan}).
Moreover, (iv) and (v) show that the root of the difficulty lies with countable subsets, which are classically always measurable.

A special case where there are no $G$-paradoxical subsets of $\Omega$ is when $G$ is supramenable, i.e., does not
itself have $G$-paradoxical subsets~\cite[Section~14.1]{WT16} (when we consider $G$ as
acting on $G$ by left-multiplication). Every abelian (i.e., commutative) group is
known to be supramenable~\cite[Theorem~14.4]{WT16}, so there are many examples where there are strong $G$-invariant
full conditional probabilities. Moreover, if $G$ is supramenable, then no nonempty subset of a space $X$ acted on by $G$
is paradoxical~\cite[p.~271]{WT16}. Thus, precisely for supramenable $G$, every case of $G$-action allows
for $G$-invariant full conditional probabilities.

To prove the theorem, define a \textit{coherent exchange rate} $c$ to be a function with values in $[0,\infty]$ on
pairs $(A,B)$ of subsets of $\Omega$ with $B$ not empty such that:
\begin{enumerate}
\item[(E1)] $c(\cdot \mid B)$ is finitely additive and non-negative
\item[(E2)] $c(A,B)c(B,C)=c(A,C)$ as long as $B$ and $C$ are nonempty and $c(A,B)c(B,C)$ is
well-defined
\item[(E3)] $c(B,B)=1$. (Cf. \cite{Armstrong89,ArmstrongSudderth89,PrussPopper})
\end{enumerate}

Here, we understand that $0\cdot \infty$ and $\infty\cdot 0$ are the undefined cases of multiplication
on $[0,\infty]$. Any full conditional probability $P$ defines a coherent exchange rate
$$
    c_P(A,B)=\frac{P(A\mid A\cup B)}{P(B\mid A\cup B)}
$$
(where $x/0=\infty$ if $x\ne 0$ and $0/0$ is undefined) and any coherent exchange rate $c$
defines a full conditional probability
$$
    P_c(A\mid B) = c(A\cap B,B)
$$
(cf. \cite{ArmstrongSudderth89,PrussPopper}).
Moreover, $P_{c_P}=P$ and $c_{P_c}=c$. We say that $c$ and $P$
correspond to each other provided that $P=P_c$, or, equivalently, $c=c_P$.

We say that $c$ is strongly $G$-invariant provided that we have $c(gA,B)=c(A,B)$ whenever $A,B$ are subsets of $\Omega$, $B$ is nonempty,
and the symmetry $g\in G$ is such that $gA\subseteq\Omega$.

\begin{lem}\label{lem:cP-inv}
    If $c$ and $P$ correspond to each other, then each is strongly $G$-invariant if and only if the
    other is.
\end{lem}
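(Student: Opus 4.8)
The plan is to prove the two implications separately. The direction ``$c$ strongly $G$-invariant $\Rightarrow$ $P$ strongly $G$-invariant'' is essentially immediate from the correspondence formula $P_c(A\mid B)=c(A\cap B,B)$, while the converse requires one small maneuver to deal with the fact that the conditioning events built into the definition of $c_P$ change when we apply a symmetry.

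For the easy direction, assume $c$ is strongly $G$-invariant, and let $g\in G$ with $A,gA\subseteq B$ and $B\ne\varnothing$. Since $A\cap B=A$ and $gA\cap B=gA$, the correspondence $P=P_c$ gives $P(A\mid B)=c(A,B)$ and $P(gA\mid B)=c(gA,B)$. As $gA\subseteq B\subseteq\Omega$ and $B$ is nonempty, strong invariance of $c$ yields $c(gA,B)=c(A,B)$, hence $P(gA\mid B)=P(A\mid B)$, as required.

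For the converse, assume $P$ is strongly $G$-invariant and fix $A,B\subseteq\Omega$ with $B\ne\varnothing$ and $g\in G$ with $gA\subseteq\Omega$; writing $c=c_P$, we must show $c(gA,B)=c(A,B)$. The obstacle is that strong invariance of $P$ only lets one replace $A$ by $gA$ \emph{under a fixed conditioning event containing both}, whereas $c(A,B)$ and $c(gA,B)$ are defined by conditioning on $A\cup B$ and on $gA\cup B$ respectively, which differ. To get around this, pass to the common nonempty superset $D=A\cup gA\cup B\subseteq\Omega$. Using the cocycle identity (E2) for the coherent exchange rate $c$ we have $c(A,B)=c(A,D)\,c(D,B)$ and $c(gA,B)=c(gA,D)\,c(D,B)$, provided the products are well-defined. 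Since $A\subseteq D$ we have $A\cup D=D$, so $c(A,D)=P(A\mid D)/P(D\mid D)=P(A\mid D)$; likewise $c(gA,D)=P(gA\mid D)$ and $c(D,B)=1/P(B\mid D)$. Because $A$ and $gA$ are both subsets of $D$, strong invariance of $P$ gives $P(gA\mid D)=P(A\mid D)$, i.e.\ $c(gA,D)=c(A,D)$, and multiplying by the common factor $c(D,B)$ gives $c(gA,B)=c(A,B)$.

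It remains to verify that the products $c(A,D)\,c(D,B)$ and $c(gA,D)\,c(D,B)$ are legitimate, i.e.\ not of the forbidden form $0\cdot\infty$. As $c(A,D)=P(A\mid D)$ and $c(gA,D)=P(gA\mid D)$ lie in $[0,1]$, the only danger is $P(A\mid D)=0$ (resp.\ $P(gA\mid D)=0$) together with $c(D,B)=\infty$, i.e.\ $P(B\mid D)=0$. But $P(A\mid D)=P(gA\mid D)$ by invariance, so in either case $P(A\mid D)=P(gA\mid D)=P(B\mid D)=0$, whence finite subadditivity of the probability $P(\cdot\mid D)$ forces $1=P(D\mid D)=P(A\cup gA\cup B\mid D)\le P(A\mid D)+P(gA\mid D)+P(B\mid D)=0$, a contradiction. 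So both products are well-defined and the argument closes. I expect this bookkeeping of the degenerate $[0,\infty]$-cases to be the only real subtlety; the conceptual content of the converse is simply the passage from $A\cup B$ to the larger conditioning event $D$ on which strong invariance of $P$ can be invoked.
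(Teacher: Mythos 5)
Your proof is correct, and both directions are handled soundly: the direction from invariance of $c$ to invariance of $P$ is the same direct substitution the paper uses, and your converse, like the paper's, rests on the cocycle identity (E2). The one genuine difference is the choice of intermediate conditioning event in the converse. The paper factors through $A$ itself: it first shows $c(gA,A)=P(gA\mid A\cup gA)/P(A\mid A\cup gA)=1$ (the ratio being well-defined since by subadditivity the two conditional probabilities cannot both vanish), and then applies (E2) in the form $c(gA,B)=c(gA,A)\,c(A,B)$; because the first factor equals $1$, the product is automatically well-defined and no $0\cdot\infty$ bookkeeping is needed (only the empty-$A$ case is split off separately). You instead factor through $D=A\cup gA\cup B$, which costs you the explicit check that $c(A,D)\,c(D,B)$ and $c(gA,D)\,c(D,B)$ avoid the forbidden form $0\cdot\infty$ — a check you carry out correctly via subadditivity of $P(\cdot\mid D)$ — but buys a slightly more symmetric argument that treats $A=\varnothing$ uniformly. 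Both routes are valid; the paper's choice of intermediate is marginally more economical.
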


\begin{proof}[Proof of Lemma~\ref{lem:cP-inv}]
    Suppose $P$ is strongly $G$-invariant. Then for any nonempty $A\subseteq\Omega$ such
    that $gA\subseteq\Omega$, we have
    $c(gA,A)=P(A\mid A\cup gA)/P(gA\mid A\cup gA)$ and the ratio is well-defined.
    But $P(gA\mid A\cup gA)=P(A\mid A\cup gA)$ by strong invariance, so the ratio
    is equal to one. Now, fix $A$ and $B$ with $B\ne\varnothing$, and again suppose that $gA\subseteq\Omega$.
    If $A$ is
    empty then $c(gA,B)=0=c(A,B)$. Suppose $A$ is nonempty. Then
    $c(gA,B)=c(gA,A)c(A,B)$ provided the right-hand side is defined. But since
    $c(gA,A)=1$, it must be defined, and indeed is equal to $c(A,B)$. Thus,
    $c(gA,B)=c(A,B)$ and we have $G$-invariance.

    Conversely, suppose $c$ is strongly $G$-invariant and $B$ is nonempty with
    $A\cup gA\subseteq B$. Then $P(A\mid B)=c(A,B)=c(gA,B)=P(gA\mid B)$ and so
    $P$ is also strongly $G$-invariant.
\end{proof}

Say that a $[0,\infty]$-valued finitely additive measure $\mu$ on an algebra $\scr F$ of subsets of $\Omega$ is
$G$-invariant if and only if $\mu(A)=\mu(gA)$ whenever both $A$ and $gA$ are in $\scr F$ and $g\in G$.

\begin{lem}\label{lem:finite}  Let $G$ act on $\Omega^*\supseteq \Omega$. Suppose that for every nonempty subset $E$ of $\Omega$, there is
a $G$-invariant finitely additive measure $\mu:\Power \Omega\to [0,\infty]$ with $\mu(E)=1$. Let $\scr F$ be a finite algebra on $\Omega$.
Then there is a $G$-invariant full conditional probability on $\scr F$.
\end{lem}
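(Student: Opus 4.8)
The plan is to build the full conditional probability directly, stratifying $\scr F$ into finitely many ``layers'' produced from the $G$-invariant measures granted by the hypothesis. Recall that a finite algebra $\scr F$ on $\Omega$ has finitely many atoms, that $\Omega$ is their disjoint union, and that for $X\in\scr F$ and a nonnegative finitely additive $\mu$ on $\Power\Omega$ one has $\mu(X)=0$ if and only if $\mu$ vanishes on every atom contained in $X$.

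First I would construct, by recursion, a finite sequence of $G$-invariant finitely additive measures $\mu_1,\dots,\mu_r\colon\Power\Omega\to[0,\infty]$. Applying the hypothesis with $E=\Omega$ gives a $G$-invariant $\mu_1$ with $\mu_1(\Omega)=1$. Given $\mu_1,\dots,\mu_{k-1}$, let $N_{k-1}$ be the union of those atoms of $\scr F$ that are $\mu_i$-null for every $i<k$, so that $N_0=\Omega$ and $N_{k-1}\in\scr F$; if $N_{k-1}=\varnothing$ the recursion stops with $r=k-1$, and otherwise the hypothesis applied with $E=N_{k-1}$ produces a $G$-invariant $\mu_k$ with $\mu_k(N_{k-1})=1$. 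The chain $N_0\supseteq N_1\supseteq\cdots$ lies in the finite algebra $\scr F$ and is strictly decreasing while nonempty, since $N_k=N_{k-1}\ne\varnothing$ would force every atom of $N_{k-1}$ to be $\mu_k$-null and hence $\mu_k(N_{k-1})=0\ne1$; so the recursion terminates, with $N_r=\varnothing$.

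Next, for each nonempty $B\in\scr F$ I would set $i(B)=\min\{k:\mu_k(B)>0\}$, which is well defined because $B$ contains an atom and, as $N_r=\varnothing$, that atom fails to be $\mu_i$-null for all $i\le r$, so $\mu_k(B)>0$ for some $k\le r$. Writing $k=i(B)$, one checks $B\subseteq N_{k-1}$, so on subsets of $B$ the measure $\mu_k$ is bounded by $\mu_k(N_{k-1})=1$ and $\mu_k(B)\in(0,1]$; thus
\[
  P(A\mid B)=\frac{\mu_{i(B)}(A\cap B)}{\mu_{i(B)}(B)}
\]
is a well-defined element of $[0,1]$. Condition (C1) is then immediate. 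For (C2), take $C$ and $A\cap C$ nonempty and put $k=i(C)$; since $A\cap C\subseteq C$ one has $i(A\cap C)\ge k$, and I would treat separately the case $i(A\cap C)=k$, where cancelling $\mu_k(A\cap C)\in(0,\infty)$ makes both sides of (C2) equal to $\mu_k(A\cap B\cap C)/\mu_k(C)$, and the case $i(A\cap C)>k$, where $A\cap C\subseteq N_k$ gives $\mu_k(A\cap C)=0$ and so $P(A\mid C)=P(A\cap B\mid C)=0$. For strong $G$-invariance, if $g\in G$ and $A,gA\in\scr F$ are both contained in a nonempty $B\in\scr F$, then with $k=i(B)$ and using $A,gA\subseteq B$ one gets $P(A\mid B)=\mu_k(A)/\mu_k(B)$ and $P(gA\mid B)=\mu_k(gA)/\mu_k(B)$; these agree since $\mu_k$ is $G$-invariant on all of $\Power\Omega$ while $A,gA\subseteq\Omega$.

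The step I expect to need the most care is noticing that the hypothesis normalizes $\mu_k$ only on $N_{k-1}$ and says nothing about $\mu_k$ outside it, where $\mu_k$ may well be infinite. This is harmless because $P$ only ever evaluates $\mu_k$ on subsets of $B\subseteq N_{k-1}$, where finiteness is automatic, and it is exactly the $G$-invariance of each $\mu_k$ on the \emph{entire} powerset---rather than on some $G$-stable subalgebra, which $\scr F$ need not be---that forces $P(A\mid B)$ to depend on $A$ only through the invariant quantity $\mu_k(A)$, which is what strong invariance requires.
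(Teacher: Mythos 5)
Your proof is correct, and it shares the paper's overall strategy: iterate the hypothesis on the successive null parts of $\scr F$ to obtain a finite sequence of $G$-invariant measures, define $P(A\mid B)=\mu_k(A\cap B)/\mu_k(B)$ for the first measure that gives $B$ positive (finite) mass, and verify (C2) by exactly the two cases you list. The difference is in the bookkeeping, and your version is the more elementary of the two. The paper replaces each measure $\nu$ supplied by the hypothesis with a modified measure equal to $\nu$ on subsets of the current null set $E_{n+1}$ and to $\infty$ elsewhere, so that the sequence is totally ordered by the R\'enyi order and the index $n(B)$ is characterized as the unique $n$ with $0<\mu_n(B)<\infty$; this forces a separate check that the modified measures are still finitely additive and $G$-invariant, the delicate step being to exclude the case $A\subseteq E_{n+1}$ but $gA\not\subseteq E_{n+1}$, which the paper rules out using invariance of the preceding measure. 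You keep the measures exactly as the hypothesis delivers them, track the descending chain $N_k$ of cumulatively null atoms, and take $i(B)$ as a minimum; since $B\subseteq N_{i(B)-1}$ gives $\mu_{i(B)}(B)\in(0,1]$ and invariance of $\mu_{i(B)}$ holds on all of $\Power\Omega$ by hypothesis, the $\infty$-modification and its case analysis simply disappear. What you lose is only the R\'enyi-chain picture, which the paper uses as a conceptual frame; what you gain is a shorter argument with one fewer verification, and the use of atoms is harmless since $\scr F$ is finite.
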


\begin{proof}
All the measures in the proof will be finitely additive.
If $\mu$ and $\nu$ are measures on the same algebra, say that $\mu \prec \nu$ provided that
for all $A\in\scr F$, if $\nu(A)>0$, then $\mu(A)=\infty$.
Say that a measure $\mu$ is non-degenerate provided that $0<\mu(A)<\infty$ for some $A$.
Then $\prec$ is known as the R\'enyi order~\cite{Armstrong89,Renyi} and is a strict partial order on non-degenerate measures.

Choose a $G$-invariant probability measure $\mu_1$ on $\scr F$ (there is one on $\Power\Omega$, so restrict
it to $\scr F$).

For $n\ge 1$, supposing we have chosen a $G$-invariant measure $\mu_n$ on $\Power\Omega$, let
$$
   E_{n+1} = \bigcup \{ B \in \scr F : \mu_n(B) = 0 \}.
$$
Note that $\mu_n(E_{n+1})=0$ since $\scr F$ is finite, so $E_{n+1}$ is the largest $\mu_n$-null member of $\scr F$.
If $E_{n+1}=\varnothing$, let $N=n$, and our construction of $\mu_1,\dots,\mu_N$ is complete.

If $E_{n+1}$ is nonempty, choose a additive $G$-invariant measure $\nu$ on $\Power \Omega$ with $\nu(E_{n+1})=1$.
For $A\in\scr F$, let $\mu_{n+1}(A) = \nu(A)$ if $A\subseteq E_{n+1}$ and $\mu_{n+1}(A) = \infty$ otherwise.

I claim that $\mu_{n+1}$ is a $G$-invariant measure on $\scr F$. To check finite additivity, suppose
$A$ and $B$ are disjoint members of $\scr F$. Then if $A$ or $B$ fails to be a subset of $E_{n+1}$, so does $A\cup B$, and so
$\mu_{n+1}(A)+\mu_{n+1}(B)=\infty=\mu_{n+1}(A\cup B)$, and if $A\cup B$ fails to be a subset of $E_{n+1}$, so does at
least one of $A$ and $B$. But if $A$, $B$ and $A\cup B$ are all subsets of $E_{n+1}$, then $\mu_{n+1}$ agrees with $\nu$
as applied to these sets, and $\nu$ is finitely additive.

It remains to check $G$-invariance. Suppose that $A,gA\in\scr F$. If both $A$ and $gA$ are subsets of $E_{n+1}$, the identity
$\mu_{n+1}(A)=\mu_{n+1}(gA)$ follows from the $G$-invariance of $\nu$. If neither is a subset of $E_{n+1}$, then
$\mu_{n+1}(A)=\infty=\mu_{n+1}(gA)$. It remains to consider the case where one of $A$ and $gA$ is a subset of $E_{n+1}$ and
the other is not. Without loss of generality, suppose that $A$ is a subset of $E_{n+1}$ and $gA$ is not (in the
other case, let $A'=gA$ and $g'=g^{-1}$, so $A'$ is a subset of $E_{n+1}$ and $g'A'$ is not). Since $A\subseteq E_{n+1}$,
we have $\mu_n(A)=0$. By $G$-invariance, $\mu_n(gA)=0$, and so $gA\subseteq E_{n+1}$, and thus the case is impossible.

Next note that that $\mu_{n+1} \prec \mu_n$. For if $\mu_n(A)>0$, then $A$ is not a subset of $E_{n+1}$ and so $\mu_{n+1}(A)=\infty$.

The finiteness of $\scr F$ guarantees that the construction must terminate in a finite number $N$ of steps, since
we cannot have an infinite sequence of non-degenerate measures on a finite algebra $\scr F$ that are totally ordered by $\prec$.

We have thus constructed a sequence of $G$-invariant measures $\mu_1,...,\mu_N$ such that $\mu_N\prec \dots\prec \mu_1$.
I claim that for any nonempty $A\in\scr F$, there is a unique $n=n_A$ such that $0<\mu_n(A)<\infty$. Uniqueness follows
immediately from the ordering $\mu_N\prec \dots\prec \mu_1$, so only existence needs to be shown.
By our construction, the only $\mu_N$-null set is $\varnothing$, so $\mu_N(A)>0$. Let $n$ be the smallest index such that
$\mu_n(A)>0$. If $\mu_n(A)<\infty$, we are done. So suppose $\mu_n(A)=\infty$. We cannot have $n=1$, since $\mu_1$ is
a probability measure on $\scr F$. Thus, $n>1$. By minimality of $n$, we must have $\mu_{n-1}(A)=0$. Thus, $A\subseteq E_n$,
and so $\mu_n(A)\le \mu_n(E_n)=1$, a contradiction.

Now, for any $(A,B)\in \scr F\times (\scr F-\{\varnothing\})$, let $P(A\mid B)=\mu_{n(B)}(A\cap B)/\mu_{n(B)}(B)$.
Then $P(\cdot\mid B)$ is finitely additive since $\mu_{n(B)}$ is.

Next, suppose we have $A$, $B$ and $C$ with $A\cap C$ nonempty. If $n(A\cap C)=n(C)$, then let $\mu=\mu_{n(C)}=\mu_{n(A\cap C)}$, so we have
\begin{align*}
    P(A\mid C)P(B\mid A\cap C) &= \frac{\mu(A\cap C)}{\mu(C)} \cdot \frac{\mu(B\cap A\cap C)}{\mu(A\cap C)} \\
        &= \frac{\mu(A\cap B\cap C)}{\mu(C)} = P(A\cap B\mid C).
\end{align*}
Now suppose that $n(A\cap C)\ne n(C)$ so $\mu_{n(C)}(A\cap C)\notin (0,\infty)$. Since $\mu_{n(C)}(A\cap C)\le \mu_{n(C)}(C) <\infty$,
we must have $\mu_{n(C)}(A\cap C)=0$. But then $P(A\mid C)=\mu_{n(C)}(A\cap C)/\mu_{n(C)}(C)=0$ and
$P(A\cap B\mid C)=\mu_{n(C)}(A\cap B\cap C)/\mu_{n(C)}(C)=0$, and so both sides of (C2) are zero.

Finally, $G$-invariance of $P$ follows immediately from $G$-invariance of the $\mu_n$.
\end{proof}

\begin{lem}\label{lem:complete}
Let $G$ act on $\Omega^*\supseteq \Omega$. There is a $G$-invariant
full conditional probability on $\Power\Omega$ if and only if for every nonempty subset $E$ of $\Omega$ there is
a $G$-invariant finitely additive measure $\mu:\Power \Omega\to [0,\infty]$ with $\mu(E)=1$.
\end{lem}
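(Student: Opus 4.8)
The plan is to prove the two implications separately: the forward one is a short computation from the exchange-rate correspondence already set up, and the reverse one lifts Lemma~\ref{lem:finite} from finite algebras to the full powerset by a compactness argument.

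For the forward direction I would take a (strongly) $G$-invariant full conditional probability $P$ on $\Power\Omega$, fix a nonempty $E\subseteq\Omega$, pass to the corresponding coherent exchange rate $c=c_P$ (which is strongly $G$-invariant by Lemma~\ref{lem:cP-inv}), and set $\mu(A)=c(A,E)$ for $A\in\Power\Omega$. By (E1) this $\mu$ is a non-negative, finitely additive, $[0,\infty]$-valued measure on $\Power\Omega$; by (E3), $\mu(E)=c(E,E)=1$; and for $g\in G$ with $gA\subseteq\Omega$, strong invariance of $c$ gives $\mu(gA)=c(gA,E)=c(A,E)=\mu(A)$, so $\mu$ is $G$-invariant. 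I would point out that trying to define $\mu$ from $P(\cdot\mid E)$ directly fails, since a set $A\subseteq E$ can be carried by $G$ outside $E$; the ``zooming in'' encoded in $c_P$ is precisely what is needed, which is exactly the role of Lemma~\ref{lem:cP-inv}.

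For the reverse direction I would assume that every nonempty $E\subseteq\Omega$ admits a $G$-invariant finitely additive $\mu\colon\Power\Omega\to[0,\infty]$ with $\mu(E)=1$; this is exactly the hypothesis of Lemma~\ref{lem:finite}, so every finite subalgebra $\scr A$ of $\Power\Omega$ carries a strongly $G$-invariant full conditional probability. I would then view all candidate conditional probabilities as points of the compact product space $X=[0,1]^{\Power\Omega\times(\Power\Omega\setminus\{\varnothing\})}$ (Tychonoff), and for each finite subalgebra $\scr A$ let $K_{\scr A}\subseteq X$ be the set of $P$ whose restriction to $\scr A\times(\scr A\setminus\{\varnothing\})$ is a strongly $G$-invariant full conditional probability on $\scr A$. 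Each defining clause---finite additivity and normalization of $P(\cdot\mid B)$, the identity $P(B\mid B)=1$, the multiplicative law (C2) in the substantive case $A\cap C\ne\varnothing$, and $P(gA\mid B)=P(A\mid B)$ for the relevant $g$ and $A,B\in\scr A$ with $A,gA\subseteq B$---is a closed condition involving only finitely many coordinates of $X$, since addition and multiplication on $[0,1]$ are continuous; hence $K_{\scr A}$ is closed, and it is nonempty because the full conditional probability on $\scr A$ supplied by Lemma~\ref{lem:finite} extends (arbitrarily on the remaining coordinates) to a point of $K_{\scr A}$. The family $\{K_{\scr A}\}$ has the finite intersection property, since for finitely many $\scr A_i$ the finite algebra $\scr A$ generated by their union satisfies $K_{\scr A}\subseteq\bigcap_i K_{\scr A_i}$ (restriction of a full conditional probability to a subalgebra again satisfies the axioms there, and strong invariance is inherited). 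Compactness of $X$ then yields a $P\in\bigcap_{\scr A}K_{\scr A}$, and to verify any single instance of any axiom for $P$ I would invoke $P\in K_{\scr A}$ with $\scr A$ the finite algebra generated by the finitely many sets appearing in that instance; this shows $P$ is a strongly $G$-invariant full conditional probability on all of $\Power\Omega$.

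I expect the only real work to be in the reverse direction's bookkeeping rather than in any genuine difficulty: checking that each axiom is honestly a closed, finite-coordinate condition---most delicately (C2) in the degenerate case $A\cap C=\varnothing$, where the right move is to observe that $P(A\cap B\mid C)=P(\varnothing\mid C)=0$ already follows from finite additivity (using $P(B\mid B)=1$ to get $P(A\mid C)=P(A\cap C\mid C)$), so no new constraint is imposed---and confirming that a point of the global intersection genuinely witnesses every axiom by choosing the appropriate finite subalgebra each time. This is shallow, but it is exactly where a compactness argument of this shape can misfire, so I would write it out carefully rather than wave at ``the usual'' argument.
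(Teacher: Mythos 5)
Your proof is correct, and it splits from the paper's in an interesting way in the forward direction. For the converse (the substantive half), your argument is essentially the paper's: the paper also restricts to finite algebras via Lemma~\ref{lem:finite}, embeds the candidates in the compact space $[0,1]^{\Power\Omega\times(\Power\Omega-\{\varnothing\})}$, and extracts a limit---it phrases this as a convergent subnet of the net indexed by the directed set of finite algebras, where you use the equivalent finite-intersection-property formulation and are rather more explicit about why each axiom is a closed, finitely-supported condition (including the degenerate case of (C2), which you handle correctly). For the forward direction, however, the paper argues quite differently: it first shows that the existence of $P$ rules out nonempty $G$-paradoxical subsets of $\Omega$ (via $1=P(E\mid E)=P(A\mid E)+P(B\mid E)=2$), and then invokes Tarski's theorem to produce, for each non-paradoxical $E$, a $G$-invariant finitely additive measure on $\Power\Omega^*$ with $\mu(E)=1$, which it restricts to $\Power\Omega$. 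Your construction $\mu(A)=c_P(A,E)$ via the coherent exchange rate and Lemma~\ref{lem:cP-inv} is a genuine alternative: it is self-contained, avoids Tarski's theorem (and hence the Axiom of Choice) in this direction, and your motivating remark is exactly right---$P(\cdot\mid E)$ itself will not do, because strong invariance only controls $P(gA\mid E)$ when both $A$ and $gA$ lie inside $E$, whereas the exchange rate's invariance $c(gA,E)=c(A,E)$ holds for all $A,gA\subseteq\Omega$. What the paper's route buys instead is a direct link to the paradoxicality condition (ii) of Theorem~\ref{th:full-cond}, which it needs anyway; your route buys economy of means and a constructive measure.
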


\begin{proof}
First suppose there is a $G$-invariant full conditional probability $P$ on $\Power\Omega$. Then if $E$ were a
nonempty paradoxical subset of $\Omega$, we could partition $E$ into disjoint subsets $A$ and $B$ that could
be  under the action of $G$ to form all of $E$, so that $1=P(E\mid E)=P(A\mid E)+P(B\mid E)=P(E\mid E)+P(E\mid E)=2$
by the finite additivity and $G$-invariance of $P(\cdot \mid E)$. But if $E$ is not a paradoxical subset, then
by Tarski's Theorem~\cite[Cor~11.2]{WT16} there is a $G$-invariant finitely additive measure $\mu$ on $\Power\Omega^*$ which
assigns measure $1$ to $E$, and we can then restrict $\mu$ to $\Power\Omega$.

Conversely, suppose for every $E$ there is a $\mu$ as in the statement of the Lemma.
For a finite algebra $\scr F$ on $\Omega$, let $P$ be a $G$-invariant full conditional probability on $\scr F$
by Lemma~\ref{lem:finite}.
Let $P_{\scr F}(A\mid B) = P(A\mid B)$ for $(A,B) \in \scr F\times (\scr F-\{\varnothing\})$ and $P_{\scr F}(A\mid B)=0$ for
all other $(A,B)\in\Power\Omega\times (\Power\Omega-\{\varnothing\})$. The set $F$ of all finite algebras $\scr F$ on $\Omega$, ordered
by inclusion, is a directed set. Since $[0,1]^{\Power\Omega\times (\Power\Omega-\{\varnothing\})}$
is a compact set by the Tychonoff Theorem, there will be a convergent subnet of the net $(P_{\scr F})_{\scr F \in F}$, and the
limit of that subnet then satisfies the conditions for a $G$-invariant full conditional probability.
\end{proof}

\begin{proof}[Proof of Theorem~\ref{th:full-cond}]
By Tarski's Theorem~\cite[Cor~11.2]{WT16}, there is a $G$-invariant measure $\mu$
on $\Omega^*$ with $\mu(E)=1$ (or, if we prefer, with $0<\mu(E)<\infty$) if and only if $E$ is not $G$-paradoxical.
Such a measure can then be restricted to $\Omega$, and so by Lemma~\ref{lem:complete}, if there are no $G$-paradoxical subsets of $\Omega$, we have
a $G$-invariant full conditional probability.

Conversely, suppose $P$ is a $G$-invariant full conditional probability. And suppose that $E$ is
a nonempty paradoxical set. Let $A$ and $B$ be the two disjoint subsets in the definition of
paradoxicality. Then $P(A\mid E)=P(E\mid E)=1$ and $P(B\mid E)=P(E\mid E)=1$,
which contradicts finite additivity of $P(\cdot\mid E)$.

Thus, (i) and (ii) are equivalent. Moreover, (i) trivially implies (iii) which also trivially implies (iv). Further, (iv) implies
(v) by the same argument as above with $P(\cdot\mid E)$ replaced by the invariant finitely additive measure $\mu$ that assigns $1$ to $E$.

We now show that not-(ii) implies not-(v) and not-(iv). First note that if there is a nonempty paradoxical subset of $\Omega$,
there is a countable nonempty paradoxical subset of $\Omega$. For suppose that $E$ is a nonempty set equidecomposable with two disjoint subsets
$A$ and $B$. The decomposition uses some finite set $S$ of elements of $G$. Let $G_1$ be the subgroup of $G$ generated
by $S$, i.e., the set of all finite products of elements of $S$ and of their inverses. Then $G_1$ is countable. Choose
any $x \in E$. Let $E_1 = G_1 x \cap E$, where $G_1 x = \{ gx : g \in G_1 \}$ is the $G_1$-orbit of $x$. Let $A_1 = G_1 x \cap A$ and
$B_1 = G_1 x \cap B$. Then it is easy to see that $E_1$ and $A_1$ are $G_1$-equidecomposable (just intersect every set involved in the
decompositions $E$ with $G_1 x$) and that so are $E_1$ and $B_1$.

Now, if $E_1$ is nonempty, countable and equidecomposable with two disjoint subsets $A_1$ and $B_1$, there cannot be a $G$-invariant
probability $P$ on $E_1$, since then we would have $1=P(E_1)=P(A_1)=P(B_1)$, which would violate finite additivity. Hence, we have not-(iv), as desired.
\end{proof}

There is also a concept of weak invariance of conditional probabilities: $P(A\mid B)=P(gA\mid gB)$~\cite{PrussPopper}.
It is known that in general weak invariance does not entail strong invariance (notwithstanding
the mistaken \cite[Proposition~1.3]{Armstrong89}) though it does entail in the special case where $G$ has no left-orderable
quotient~\cite{PrussInvariance}.
Otherwise, little appears to be known about this concept, though in Section~\ref{sec:weak} we will show that this concept does not
capture our symmetry intuitions.

\section{Invariant hyperreal measures and strongly invariant qualitative probabilities}\label{sec:strong}
A \textit{partial qualitative probability} $\lessapprox$ on a Boolean algebra $\scr F$ of subsets of $\Omega \subseteq \Omega^*$ is
a relation that satisfies these conditions:
\begin{enumerate}
\item[(Q1)] preorder: reflexivity and transitivity
\item[(Q2)] non-negativity: $\varnothing \lessapprox A$ for all $A$
\item[(Q3)] (finite) additivity: if $A\cap C=B\cap C=\varnothing$, then $A\lessapprox B$ if and only if $A\cup C \lessapprox B\cup C$.
\end{enumerate}
Note that additivity is equivalent to saying that $A\lessapprox B$ if and only if $A-B\lessapprox B-A$.
A total qualitative probability additionally satisfies the totality condition that $A\lessapprox B$ or $B\lessapprox A$ for all $A$ and $B$.
For a good discussion of basic results, see, e.g., \cite{KLST71}.

In a general, a \textit{preorder} is a relation satisfying (Q1) and a preorder $\lessapprox$ is \textit{total} provided that $A\lessapprox B$ or $B\lessapprox A$
for all $a$ and $B$. We will write $A<B$ provided that $A\lessapprox B$ but not $B\lessapprox A$, and $A\approx B$ provided $A\lessapprox B$ and $B\lessapprox A$.

\textit{Regularity} is the condition that $\varnothing < A$ whenever $A\ne\varnothing$.
Finite additivity and regularity together imply that if $A\subset B$, then $A<B$.

\textit{Strong invariance} under a group $G$ acting on $\Omega^*\supseteq\Omega$ then says that $A\approx gA$ whenever $A\in \scr F$
and $g\in G$ are such that $gA\in\scr F$.

\textit{Weak invariance} says that if $A\lessapprox B$, then $gA\lessapprox gB$, as long as both $gA$ and $gB$ are
in $\scr F$.
In Section~\ref{sec:weak}, we shall study weak invariance, and characterize the cases where weak invariance
implies strong invariance. However, we shall show that in some other cases, weak invariance is much weaker
than strong invariance, and does not capture symmetry intuitions well.

A \textit{hyperreal probability} on $\scr F$ is a function from $\scr F$ to some hyperreal field $\hR$ satisfying
the classical axioms of finitely-additive probability:
\begin{enumerate}
\item[(H1)] non-negativity: $P(A)\ge 0$
\item[(H2)] normalization: $P(\Omega)=1$
\item[(H3)] finite additivity: if $A\cap B=\varnothing$, then $P(A\cup B)=P(A)+P(B)$.
\end{enumerate}
Regularity then says that $P(A)>0$ whenever $A\ne\varnothing$ and invariance says that $P(gA)=P(A)$ for all $g$ and $A$.

A hyperreal probability $P$ defines a total qualitative probability $\lessapprox_P$ by specifying that $A\lessapprox_P B$ if and only if
$P(A)\le P(B)$. Regularity for $P$ and $\lessapprox_P$ are equivalent, and invariance for $P$ is equivalent to strong invariance for
$\lessapprox_P$. Interestingly, not every total qualitative probability can be defined in this way~\cite{KPS59}.

An important tool will be local finiteness of action.
Suppose $G$ acts on $\Omega^*\supseteq\Omega$.
Fix $x\in\Omega$ and let $H$ be a subset of $G$.
Let $G_{H,x,0} = \{ x \}$. Given $G_{H,x,n}$, let
$$
    G_{H,x,n+1}=\{ hx : h\in H \text{ and } x \in G_{H,x,n} \text{ and } hx\in\Omega \}.
$$
Let
$$
    G_{H,x} = \bigcup_{n=0}^\infty G_{H,x,n}.
$$
Say that $G$'s action is \textit{locally
finite within $\Omega$} provided that for all $x\in\Omega$ and any finite subset $H$ of $G$, the
set $G_{H,x}$ is finite.

A failure of local finiteness within $\Omega$ means there is a finite $H\subseteq G$ and
a starting point $x\in\Omega$ such that we can visit infinitely many different points starting from $x$, moving by
means of members of $H$ (i.e., moving from some point $y$ to a point $hy$ for
$h\in H$), without ever leaving $\Omega$.

In the special case where $\Omega^*=\Omega$, local finiteness of action
is the same as the concept of the local finiteness of the action of $G$~\cite{Scarparo18}, and if
$\Omega^*=\Omega=G$, it is equivalent to local finiteness of the group $G$, i.e., the claim
that $G$ has no infinite finitely generated subgroups.

The main result of this paper is the following.
Note that the equivalence of (i) and (ii) has been proved by \cite{Scarparo18} in the special case where $\Omega^*=\Omega$
and our proof that (ii) implies (i) will be almost the same.

\begin{thm}\label{th:main} Suppose that $G$ is a group acting on $\Omega^*$. Then the following are equivalent:
    \begin{itemize}
        \item[(i)] The action of $G$ is locally finite within $\Omega$
        \item[(ii)] No subset of $\Omega$ is $G$-equidecomposable with a proper subset of itself
        \item[(iii)] There is a regular $G$-invariant hyperreal probability on $\Power\Omega$
        \item[(iv)] There is a regular total strongly $G$-invariant qualitative probability on $\Power\Omega$
        \item[(v)] There is a regular partial strongly $G$-invariant qualitative probability on $\Power\Omega$
        \item[(vi)] For every $x\in\Omega$, there is a total strongly $G$-invariant qualitative probability on $\Power\Omega$ that
            ranks $\{x\}$ as more probable than $\varnothing$.
        \item[(vii)] No countable subset of $\Omega$ is $G$-equidecomposable with a proper subset of itself
        \item[(viii)] For every countable nonempty subset $E$ of $\Omega$, there is a regular $G$-invariant hyperreal probability on $\Power E$
        \item[(ix)] For every countable nonempty subset $E$ of $\Omega$, there is a regular partial strongly $G$-invariant qualitative probability on $\Power E$.
    \end{itemize}
\end{thm}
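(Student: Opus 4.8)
The plan is to prove the nine conditions equivalent through a web of implications, with the real content in the equivalence of (i), (ii), (vii) and in the construction giving (i)$\Rightarrow$(iii). First I would record the routine implications: (iii)$\Rightarrow$(iv) by passing to $\lessapprox_P$ as noted above; (iv)$\Rightarrow$(v) since a total qualitative probability is a partial one; (iv)$\Rightarrow$(vi) since a regular total strongly $G$-invariant qualitative probability ranks $\{x\}$ above $\varnothing$ for every $x$; (iii)$\Rightarrow$(viii) because, $P$ being regular, $P(E)>0$ for nonempty $E$ and then $A\mapsto P(A)/P(E)$ is a regular $G$-invariant hyperreal probability on $\Power E$; and (v)$\Rightarrow$(ix) by restriction to $\Power E$.

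Next I would establish (i)$\Leftrightarrow$(ii)$\Leftrightarrow$(vii) via the following observation: if $\Omega$ contains pairwise distinct points $z_0,z_1,z_2,\dots$ with $z_{k+1}=h_kz_k$ for $h_k$ ranging over a fixed finite $H_0\subseteq G$, all $z_k\in\Omega$, then $Z=\{z_0,z_1,\dots\}$ is $G$-equidecomposable with its proper subset $Z'=\{z_1,z_2,\dots\}$; indeed, partition $Z$ into the finitely many nonempty sets $Z_h=\{z_k:h_k=h\}$ for $h\in H_0$ and note that $\bigcup_{h\in H_0}h(Z_h)=Z'$ is a disjoint union of subsets of $\Omega$. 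For (i)$\Rightarrow$(ii), argue the contrapositive: a $G$-equidecomposition of $A$ with a proper subset $B$ yields a bijection $\phi\colon A\to B\subsetneq A$ that acts on each piece $A_i$ as some $g_i\in G$; for $x\in A\setminus B$ the iterates $x,\phi x,\phi^2x,\dots$ are pairwise distinct (since $\phi$ is injective with image $B$ and $x\notin B$), all lie in $A\subseteq\Omega$, and are obtained from $x$ by successive applications of elements of $H_0=\{g_1,\dots,g_n\}$ without ever leaving $\Omega$, so $G_{H_0,x}$ is infinite; and since these iterates also form a countable set $Z$ equidecomposable with $Z'\subsetneq Z$, the same computation gives $\neg(\text{ii})\Rightarrow\neg(\text{vii})$, while $\neg(\text{vii})\Rightarrow\neg(\text{ii})$ is trivial. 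For (ii)$\Rightarrow$(i), argue the contrapositive as in \cite{Scarparo18}: if $G_{H,x}$ is infinite, replace $H$ by $H\cup H^{-1}$ (still finite, still yielding an infinite reachable set) and observe that $G_{H,x}$ is ``$H$-closed in $\Omega$'' --- $y\in G_{H,x}$, $h\in H$, $hy\in\Omega$ imply $hy\in G_{H,x}$ --- so the graph on vertex set $G_{H,x}$ with an edge $\{y,hy\}$ whenever $h\in H$ and $hy\in\Omega$ is connected, of finite degree, and infinite; by K\"onig's Lemma it contains a ray $z_0,z_1,\dots$, and the observation above contradicts (ii).

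The heart of the proof is (i)$\Rightarrow$(iii). Let $D$ be the set of pairs $(S,H)$ with $S\subseteq\Omega$ finite and nonempty and $H\subseteq G$ finite and symmetric, directed by coordinatewise inclusion, and let $\mathcal U$ be an ultrafilter on $D$ containing every tail $\{(S',H'):(S,H)\le(S',H')\}$ (these tails have the finite intersection property). For $(S,H)\in D$ put $F_{S,H}=\bigcup_{x\in S}G_{H,x}$; by local finiteness within $\Omega$ this set is finite, and it is a nonempty, $H$-closed subset of $\Omega$. Define $P\colon\Power\Omega\to\hR$, where $\hR=\mathbb R^{D}/\mathcal U$, by letting $P(A)$ be the $\mathcal U$-class of $(|A\cap F_{S,H}|/|F_{S,H}|)_{(S,H)\in D}$. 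Non-negativity, normalization ($P(\Omega)=1$ because $F_{S,H}\subseteq\Omega$) and finite additivity are immediate; regularity holds because for nonempty $A$ and $x\in A$, every $(S,H)$ with $x\in S$ contributes a term at least $1/|F_{S,H}|>0$ and these $(S,H)$ form a set in $\mathcal U$. For strong $G$-invariance, fix $g$ and $A,gA\subseteq\Omega$: whenever $g\in H$ (hence $g^{-1}\in H$, as $H$ is symmetric), the map $y\mapsto gy$ is a bijection of $A\cap F_{S,H}$ onto $gA\cap F_{S,H}$ --- it lands in $F_{S,H}$ by $H$-closedness since $gy\in gA\subseteq\Omega$, and $y\mapsto g^{-1}y$ is its two-sided inverse for the same reason --- so $|A\cap F_{S,H}|=|gA\cap F_{S,H}|$ for every $(S,H)$ in a tail, giving $P(gA)=P(A)$. (When $\Omega$ is finite one instead builds a genuine real-valued probability of this kind directly, taking it uniform on each $G$-orbit of $\Omega$.)

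Finally I would close the cycle on the qualitative side. Here I would use the standard consequence of (Q1)--(Q3), provable by induction on $n$ using (Q3) (see \cite{KLST71}), that if $A_1,\dots,A_n$ are pairwise disjoint, $B_1,\dots,B_n$ are pairwise disjoint, and $A_i\approx B_i$ for all $i$, then $\bigcup_iA_i\approx\bigcup_iB_i$; in particular, $G$-equidecomposable subsets of $\Omega$ are ranked equivalent by any partial strongly $G$-invariant qualitative probability, and $G$-equidecomposable subsets of a set $E$ receive equal value under any $G$-invariant hyperreal probability on $\Power E$. Then (v)$\Rightarrow$(ii): if $A$ were $G$-equidecomposable with a proper subset $B$, we would have $A\approx B$, whereas regularity and (Q3) force $B<A$ (since $A\setminus B\ne\varnothing$ gives $\varnothing<A\setminus B$), a contradiction; (vi)$\Rightarrow$(ii) is the same argument, now using the qualitative probability supplied by (vi) that ranks some $x\in A\setminus B$ above $\varnothing$; and (ix)$\Rightarrow$(vii), (viii)$\Rightarrow$(vii) run these arguments --- respectively, the hyperreal form of the first one --- inside $\Power E$ for $E$ a countable subset witnessing the failure of (vii). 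Assembling everything --- (i)$\Leftrightarrow$(ii)$\Leftrightarrow$(vii), (i)$\Rightarrow$(iii)$\Rightarrow$(iv)$\Rightarrow$(v)$\Rightarrow$(ii), (iv)$\Rightarrow$(vi)$\Rightarrow$(ii), (iii)$\Rightarrow$(viii)$\Rightarrow$(vii), (v)$\Rightarrow$(ix)$\Rightarrow$(vii) --- gives all nine equivalences. I expect the main obstacle to be the $G$-invariance step of (i)$\Rightarrow$(iii): it is precisely there that local finiteness within $\Omega$ (so that $|F_{S,H}|$ is a finite number and the ratios make sense) and $H$-closedness (so that the translation $y\mapsto gy$ maps $A\cap F_{S,H}$ onto $gA\cap F_{S,H}$) are both indispensable, and managing the bookkeeping of these finite local sets is the crux.
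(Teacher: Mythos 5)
Your proposal is correct and, for the substantive implications, follows the same route as the paper: an ultraproduct of uniform counting measures on the finite $H$-closed sets $F_{S,H}$ for (i)$\Rightarrow$(iii), the K\"onig's Lemma ray argument for (ii)$\Rightarrow$(i), and the Krantz--Luce--Suppes--Tversky additivity lemma to get (v)$\Rightarrow$(ii). The one genuine divergence is your treatment of (vi): the paper closes that loop by lexicographically amalgamating \emph{all} partial strongly invariant qualitative probabilities along a well-ordering of an index set to manufacture a regular total one, whereas you argue (vi)$\Rightarrow$(ii) directly from the single chain $\varnothing<\{x\}\lessapprox A\setminus B\approx\varnothing$ for a point $x\in A\setminus B$ --- shorter, and avoiding the extra appeal to the Axiom of Choice in that step. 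Your routing of the countable conditions (normalizing the restriction of $P$ for (iii)$\Rightarrow$(viii), and producing the countable witness for $\neg$(vii) as the forward orbit of a point of $A\setminus B$ under the equidecomposition bijection rather than by intersecting with a countable subgroup orbit) is likewise sound and marginally more direct than, but substantively equivalent to, the paper's.
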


\textbf{Remark:} The proofs of $\text{(iii)}\rightarrow\text{(iv)}\rightarrow\text{(v)}\rightarrow\text{(vii)}\rightarrow\text{(ii)}$ do not
 need the Axiom of Choice. The proof of $\text{(ii)}\rightarrow\text{(i)}$ only uses K\"onig's Lemma, so it only needs the Axiom of Countable Choice.

It is interesting to note that (vii)--(ix) show that just as in the case of conditional probabilities,
the difficulty in defining strongly invariant probabilities lies precisely with the countable subsets.

Note that if $G$ is itself a locally finite group, then (i) is automatically satisfied. Thus,
just as supramenable groups $G$ were precisely the groups that had the property that their action (even on
a superset $\Omega^*$)
always admits invariant full conditional probabilities, so too the locally finite groups $G$ are precisely
the ones that have the property that their action (even on a superset $\Omega^*$) always admits invariant
regular hyperreal and total qualitative probabilities.

Since a regular $G$-invariant hyperreal probability $P$ defines a full conditional probability
by the formula $P^*(A\mid B) = \St (P(A\cap B)/P(B))$ (where $\St(x)$ is the standard part of a
finite hyperreal), it is no surprise that condition (ii) entails the
condition that there are no nonempty $G$-paradoxical sets in Theorem~\ref{th:full-cond}. Note that
the condition that there are no nonempty $G$-paradoxical sets is strictly weaker.
For instance, if $G$ is the group of rotations and $\Omega=\Omega^*$ is the unit circle, then $\Omega$ has
no nonempty $G$-paradoxical sets because $G$ is abelian, and so there is a $G$-invariant full
conditional probability. But the set of points $\{x,\rho x,\rho^2 x,\dots\}$ mentioned in the
Introduction is equidecomposable with a proper subset, since a rotation of it by $\rho$
\textit{is} is a proper subset. So, there is no $G$-invariant hyperreal probability or
strongly $G$-invariant qualitative probability on the power set of $\Omega$.

For a slightly more complicated well-known application, suppose that $\Omega$ is the interval $[0,1]$, $\Omega^*=\R$,
and $G$ is the group of all translations on $\R$, which translations we can identify with members of $\R$ acting additively.
Let $r$ be any irrational number in $(0,1/2)$ and let
$H = \{ -1/2, r \}$. Inductively generate a sequence $x_0,x_1,x_2,...$ of numbers in $[0,1]$ by letting $x_0=0$, and then
letting $x_{n+1}=x_n+r$ if $x_n+r \in [0,1]$ and $x_{n+1}=x_n-1/2$ otherwise. It is easy to see that this sequence
has no repetitions, and so $G_{H,0}$ is infinite, so that the translations do not have locally finite action within
$[0,1]$.

Any abelian group all of whose elements have finite order (i.e., $g^n=e$ for some finite $e$) is locally
finite. For if $S$ is a finite subset of elements $a_1,...,a_n$ respectively of finite orders $m_1,...,m_n$, then $S$
generates the finite subgroup of all elements of the form $a_1^{k_1}\cdots a_n^{k_n}$ where $0\le k_i < m_i$.

Thus, an example of a case where the conditions are satisfied is where we have a collection of coin flips,
and our symmetries consist in the reversal of a subset of the results. Here, we can take $\Omega$ and
$G$ to be the group $\Z_2^I$ of zero-one sequences with pointwise addition modulo $2$. Reversal of
a subset $J\subseteq I$ of the results then corresponds to addition of the element of $G$ that is
$1$ for coordinates in $J$ and $0$ for coordinates outside $J$. Every element then has finite order (indeed,
every element other than the identity has order $2$), and so condition (i) is satisfied.

On the other hand, if we have a bidirectionally infinite sequence of coin flips, and our symmetries consist
in translations, then condition (i) is not satisfied. For if $\omega$ is any sequence that has exactly one heads,
and $\tau$ is any non-trivial translation, then $\{ \tau^n \omega : n \ge 0 \}$ is infinite.

\begin{proof}[Proof of Theorem~\ref{th:main}]
We will first see that $\text{(i)}\rightarrow\text{(iii)}\rightarrow\text{(iv)}\rightarrow\text{(v)}\rightarrow\text{(ii)}\rightarrow\text{(i)}$,
then $\text{(iv)}\rightarrow\text{(vi)}\rightarrow\text{(v)}$, and finally see that (vii)--(ix) are equivalent to the earlier
conditions.

Assume (i). Suppose that $H$ is a finite symmetric subset of $G$, i.e., a subset such that if $h\in H$ then $h^{-1}\in H$, and
that $B$ is a finite subset of $\Omega$ with the relative $H$-closure property that if $h\in H$, $b\in B$ and $hb\in\Omega$, then
$hb\in B$. Let $P_{H,B}$ be uniform measure on $B$: $P_{H,B}(U) = \| U\| /\| B\| $ for $U\subseteq B$. This is a regular $H$-invariant probability measure
on $\Power B$. Regularity is trivial. To check for $H$-invariance, suppose $h\in H$ and $hU \subseteq \Omega$. Then by relative
$H$-closure, we have $hU\subseteq B$. But because $h$ is one-to-one on $\Omega^*$, the cardinality of $hU$ must be the same as that of $U$,
so $P_{H,B}(hU)=P_{H,B}(U)$.

Now, let $F$ be the set of all pairs $(H,B)$ where $H$ and $B$ are finite subsets of $G$ and $\Omega$ respectively with $B$ nonempty.
For $U\subseteq \Omega$, let $P_{(H,B)}(U) = P_{H',B'}(U\cap B')$ where $H'=\{ h^{-1} : h \in H \}\cup H$ and
$$
    B'=\bigcup_{x\in B} G_{H',x}.
$$
Note that $B'$ is finite because the action of $G$ is locally finite within $\Omega$.

Say that $(H,B)\preceq (J,C)$ if and only if $H\subseteq J$ and $B\subseteq C$.
Let
$$
    F^*=\{\{\beta \in F:\alpha \preceq \beta\} : \alpha\in F \}.
$$
This is a nonempty set with the finite intersection property. Let $\frak U$ be an ultrafilter
extending $F^*$. Let $\hR$ be an ultraproduct of the reals along $\frak U$, i.e., the set of
equivalence classes $[f]$ of functions $f$ from $F$ to $\R$ under the equivalence relation
defined by saying that $f\sim g$ if and only if $\{ \alpha \in F : f(\alpha)=g(\alpha) \} \in \frak U$.

For $U\subseteq\Omega$, let $P(U)$ be the equivalence class of the function $\alpha\mapsto P_{\alpha}(U)$.
Because each $P_\alpha$ satisfies the axioms of finitely-additive probability, so does $P$. Moreover,
if $U$ is nonempty, then let $\alpha = (\varnothing,\{ \omega\})$ for any fixed $\omega\in U$.
Then, if $\alpha\preceq\beta$, we have $P_\beta[U] > 0$. Thus, $\{ \beta \in F : P_\beta[U] > 0 \}$ contains
$\{ \beta\in F : \alpha\preceq\beta \}$, which is a member of $F^*$. Since $\frak U$ is an ultrafilter extending $F^*$,
it follows that $\{ \beta \in F : P_\beta[U] > 0 \} \in \frak U$, so $[\beta \mapsto P_{\beta}[U]] > [0]$, where $[0]$ (often just written
``$0$'' for convenience) is the equivalence
class of the function that is identically zero on $F$. Hence, we have regularity.

It remains to show that we have $G$-invariance. Suppose $g\in G$ and $U\subseteq\Omega$ is such that $gU\subseteq\Omega$.
We must show that $P(U)=P(gU)$. This is trivial if $U$ is empty, so suppose that $U$ is nonempty. Let
$\alpha = (\{g\}, \{\omega\})$ for some $\omega\in U$. As in the case of regularity, all we need to show is that
$P_\beta(gU) = P_\beta(U)$ if $\alpha\preceq\beta$. Suppose then that $\beta=(H,B)$. Replacing $H$ and $B$ with $H'$ and $B'$
as per their earlier definitions if necessary, we may suppose that $H$ is symmetric and $B$ has the relative $H$-closure
condition that if $b\in B$, $h\in H$ and $hb\in\Omega$, then $hb\in B$. Let $U'=U\cap B$. Then $gU'=gU\cap B$ by
the relative $H$-closure of $B$. Then $P_\beta(U)=P_{H,B}(U')=P_{H,B}(gU')=P_\beta(gU)$, as desired.

The implication (iii)$\rightarrow$(iv) follows by letting the hyperreal probability
define the qualitative probability, and (iv)$\rightarrow$(v) is trivial.

Assume (v). The axioms of qualitative probability imply that if $A_1,...,A_n$ are disjoint and $B_1,...,B_n$ are also disjoint,
and $A_i\approx B_i$ for all $i$, then $\bigcup_{i=1}^n A_i \approx \bigcup_{i=1}^n B_i$ by \cite[Lemma~5.3.1.2]{KLST71}.
It follows that if we have strong $G$-invariance, then any two $G$-equidecomposable sets are equally
likely. Suppose then $A$ is $G$-equidecomposable with a subset $B$ of itself. Then $B\cup \varnothing =B\approx A=B\cup (A-B)$.
So, by additivity $\varnothing\approx A-B$, which by regularity can only be true if $A-B$ is empty,
i.e., $B$ is an improper subset of $A$. So, given (v), no subset of $\Omega$ can be $G$-equidecomposable
with a proper subset of itself. That yields (ii).

Now we need to show that (ii) implies (i). For a contrapositive proof that is based on ideas of \cite{Scarparo18}, suppose the action of $G$ is not locally finite
within $\Omega$. Thus, $G_{H,x}$ is infinite for some finite $H\subseteq G$ and $x\in G$. Without loss of generality
suppose that $H$ contains the identity $e$ and is symmetric. Following \cite{Scarparo18}, we can consider $G_{H,x}$ an
infinite connected graph
where there is an edge between $x,x'\in G_{H,x}$ if $x'=hx$ for some $h\in H-\{e\}$. By K\"onig's Lemma
there is a ray on this graph, i.e., an infinite path with a starting point and no repetitions.
Suppose $x_1,x_2,\dots$ is a ray, and suppose that $x_{n+1}=h_n x_n$ for $h_n \in H-\{e\}$.
Let $r = \{ x_n : n \ge 1 \}$.

For $h\in H$, let $A_h = \{ x_n : n\ge 1 \text{ and }h_{n} = h  \}$. Note that $(A_h)_{h\in H}$ is a finite partition of $r$
and $(hA_h)_{h\in H}$ is a finite partition of $\{ x_n : n > 1 \}\subset r$. It follows that $r$ is $H$-equidecomposable with a
proper subset, and since $H\subseteq G$, the proof is complete.

Next, clearly (iv) implies (vi).

Suppose (vi) is true. Let $\{ \lessapprox_i : i \in I \}$ be the set of
all partial strongly $G$-invariant qualitative probabilities on $\Power\Omega$, indexed with some set $I$.
Let $\prec$ be a strict well-ordering of $I$. Define the lexicographic ordering $A\lessapprox B$ if and
only if either $A\approx_i B$ for all $i$ or else $A<_i B$ for the $\prec$-first $i$ for which $A\not\approx_i B$.
This is easily checked to be a $G$-invariant total preorder.
Non-negativity and additivity for $\lessapprox$ follows from the corresponding conditions for the $\lessapprox_i$.
Moreover, it is regular, since $\varnothing\lessapprox_i A$ for all
$A$, but if $A$ is nonempty and hence contains some $x$, then for some $i$ we have $\varnothing<_i \{ x \}\lessapprox_i A$ by (iv).

Thus, (i)--(vi) are equivalent. Condition (ii) implies (vii) trivially. The equivalence of (vii), (viii) and (ix)
follows from the equivalence of (ii), (iii) and (v) as applied to a nonempty countable $E$ in place of $\Omega$.

It remains to show that not-(ii) implies not-(viii). This is much as in the proof of Theorem~\ref{th:full-cond}. If $E$ is equidecomposable with a proper
subset $A$, let $G_1$ be the countable subgroup of $G$ generated by all the elements involved in the equidecomposition.
Fix $x \in E-A$. Let $E_1 = E \cap G_1 x$, set $A_1 = A_1 \cap G_1 x$, and note that $E_1$ will be $G_1$-equidecomposable
with $A_1$, which is a proper subset of $E_1$ as $x\in E_1-A_1$.
\end{proof}

\section{Weak invariance}\label{sec:weak}
For both full conditional probabilities and qualitative probabilities, we have a concept of weak
invariance. In both cases, however, I will argue that this concept fails to capture intuitive
symmetries in fair lotteries.

Recall that a qualitative probability $\lessapprox$ is weakly $G$-invariant on $\Omega\subseteq\Omega^*$
provided that $gA\lessapprox gB$ if and only if $A\lessapprox B$, assuming all four sets $A$, $B$, $gA$
and $gB$ are subsets of $\Omega$.
We say that a full conditional probability $P$ is weakly $G$-invariant in these circumstances
provided that $P(gA\mid gB)=P(A\mid B)$ under the same conditions.

For simplicity, in this section I restrict discussion to the case where $\Omega^*=\Omega$. Then under a
certain group-theoretic condition on $G$, weak invariance implies strong invariance.

Suppose that $\scr F$
is a $G$-invariant algebra of subsets of $\Omega$, i.e., if $A \in \scr F$ and $g\in G$, then $gA\in\scr F$.

Say that $\le$ is a \textit{left order} (respectively, \textit{total left preorder}) on a group $G$ provided that $\le$ is a total
order (total left preorder) such that $a\le b$ if and only if $ca\le cb$ for all $a,b,c\in G$.  A preorder is
\textit{non-trivial} provided that for some $a$ and $b$ we have $a<b$. A quotient of a group is \textit{non-trivial} provided
that it contains more than one element. The equivalence of (i) and (iv) in the result below is due to \cite{PrussInvariance}.

\begin{thm}\label{th:wk-strong}
The following are equivalent:
\begin{itemize}
\item[(i)] $G$ has no non-trivial quotient with a left order
\item[(ii)] There is no non-trivial total left preorder on $G$
\item[(iii)] Whenever $G$ acts on a set $\Omega$ with a $G$-invariant algebra $\scr F$, every weakly $G$-invariant
    total qualitative probability on $\scr F$ is strongly $G$-invariant
\item[(iv)] Whenever $G$ acts on a set $\Omega$ with a $G$-invariant algebra $\scr F$, every weakly $G$-invariant
    full conditional probability $P$ on $\scr F$ is strongly $G$-invariant.
\end{itemize}
\end{thm}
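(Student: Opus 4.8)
The plan is to take the cited equivalence of (i) and (iv) (from \cite{PrussInvariance}) as given, and to prove (i)$\Leftrightarrow$(ii) by pure group theory together with (ii)$\Leftrightarrow$(iii); the resulting cycle then yields the theorem. For the group-theoretic equivalence: one direction is a pullback — if $\bar G = G/N$ is a non-trivial quotient carrying a left order $\le$, then $a\preceq b :\Leftrightarrow aN\le bN$ is a non-trivial total left preorder on $G$. Conversely, given a non-trivial total left preorder $\preceq$ on $G$, let $\approx$ be its symmetric part, put $\Lambda = G/\!\approx$ with the induced total order, and let $G$ act on $\Lambda$ by $g\cdot\bar a = \overline{ga}$. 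This action is well defined and order-preserving (by left invariance of $\preceq$), transitive (since $g\mapsto ga$ permutes $G$), and non-trivial (else $\Lambda$ is a singleton, contradicting the existence of $u\prec v$). Its kernel $K$ is then a proper normal subgroup, and $G/K$ acts faithfully and order-preservingly on the totally ordered set $\Lambda$. Well-ordering $\Lambda$ as $\{\mu_i\}$ and comparing elements of $G/K$ by the first coordinate $\mu_i$ on which they disagree produces a left order on $G/K$: faithfulness makes it total, order-preservingness makes it left-invariant. Since $G/K$ is non-trivial, (i) holds.

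\textbf{(ii)$\Rightarrow$(iii), by contraposition (extraction).} Suppose some action of $G$ on $\Omega$ with a $G$-invariant algebra $\scr F$ carries a weakly $G$-invariant total qualitative probability $\lessapprox$ that is not strongly invariant, so that $A\not\approx g_0A$ for some $A\in\scr F$, $g_0\in G$; after possibly replacing $(A,g_0)$ by $(g_0A,g_0^{-1})$ we may assume $A< g_0A$. Define $g\preceq h :\Leftrightarrow gA\lessapprox hA$ on $G$ (legitimate since $\scr F$ is $G$-invariant, so all $gA\in\scr F$). Reflexivity and transitivity are immediate, totality follows from totality of $\lessapprox$, and left invariance is exactly weak invariance of $\lessapprox$ ($fgA\lessapprox fhA$ iff $gA\lessapprox hA$). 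Finally $e\prec g_0$, since $A\lessapprox g_0A$ while $g_0A\not\lessapprox A$. Thus $\preceq$ is a non-trivial total left preorder, i.e., not-(ii).

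\textbf{(iii)$\Rightarrow$(ii), by contraposition (construction).} Given a non-trivial total left preorder $\preceq$ on $G$, form $\Lambda$ and the transitive, order-preserving, non-trivial action of $G$ on $\Lambda$ exactly as above. Take $\Omega=\Lambda$ and let $\scr F$ be the $G$-invariant algebra of finite and cofinite subsets of $\Lambda$. On $\scr F$, declare every finite set strictly below every cofinite set; compare two finite sets first by cardinality, breaking ties by requiring $\max(A\triangle B)$ to lie in the larger set; and compare two cofinite sets by the reverse of this comparison applied to complements. This is a total qualitative probability; the only delicate axiom is additivity, which holds because a disjoint $C$ leaves $A\triangle B$ (equivalently, the symmetric difference of the complements) unchanged, together with a short cardinality bookkeeping in the mixed cases. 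It is weakly $G$-invariant because $G$ acts by order-preserving bijections of $\Lambda$, which preserve cardinality, finiteness and cofiniteness, complementation, and the element $\max(A\triangle B)$. But it is not strongly invariant: since the action is non-trivial there are $\lambda$ and $g_0$ with $\lambda<g_0\lambda$, and then the singletons give $\{\lambda\}<g_0\{\lambda\}$. This is not-(iii), completing the cycle.

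Because (i)$\Leftrightarrow$(iv) is cited, nothing further is logically required, but for concreteness one may record the parallel construction witnessing (iv): on the same $(\Lambda,\scr F)$, condition on a finite $B$ by the point mass at $\max B$ and on a cofinite $B$ by the $0$--$1$ valued measure assigning $1$ to exactly the cofinite sets; coherence (C2) is a brief case check, weak invariance is immediate from order-preservingness and $G$-invariance of cofiniteness, and strong invariance fails already on $B=\{\lambda,g_0\lambda\}$, $A=\{\lambda\}$. I expect the two friction points to be: (a) the passage in (ii)$\Rightarrow$(i) from a bare total left preorder to an honest left-ordered quotient, where the kernel-and-lexicographic argument does the real work; and (b) verifying additivity of the constructed qualitative probability — in particular, seeing that restricting to the finite--cofinite algebra is precisely what lets one sidestep the non-existence of maxima on infinite subsets of $\Lambda$ while still producing a weakly-but-not-strongly invariant example.
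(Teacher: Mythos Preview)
Your proposal is correct, and the overall architecture matches the paper's: you cite (i)$\Leftrightarrow$(iv), you prove (i)$\Leftrightarrow$(ii), and you prove (ii)$\Leftrightarrow$(iii) by extracting a left preorder in one direction and constructing a qualitative probability on the finite--cofinite algebra of an ordered $G$-set in the other. The extraction step (not-(iii)$\Rightarrow$not-(ii)) is essentially identical to the paper's.

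There are, however, two genuine differences worth noting. First, for (ii)$\Rightarrow$(i) the paper simply cites the result (to Cornulier and \cite{PrussInvariance}), whereas you supply a self-contained argument: pass to the faithful action of $G/K$ on the totally ordered set $\Lambda=G/\!\approx$, then left-order $G/K$ lexicographically along a well-ordering of $\Lambda$. This is the standard ``order-automorphism groups are left-orderable'' trick, and it works. Second, and more interestingly, your witness for not-(ii)$\Rightarrow$not-(iii) is a \emph{different} qualitative probability from the paper's. The paper sets $A\lessapprox B$ iff every $x\in A\setminus B$ is dominated by some $y\in B\setminus A$; you instead compare finite sets first by cardinality and then by the location of $\max(A\triangle B)$, with cofinite sets handled by reversing on complements. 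These genuinely disagree (e.g., on $\Lambda=\{1,2,3,\dots\}$ the paper has $\{1,2\}<\{3\}$ while you have $\{3\}<\{1,2\}$), yet both satisfy (Q1)--(Q3), are weakly invariant, and fail strong invariance on a singleton. Your order has the virtue that additivity is almost immediate (since $A\triangle B$ and the cardinality gap are unchanged by disjoint unions); the paper's order requires a longer transitivity argument but is perhaps more natural as a ``domination'' comparison. You also work directly on $\Lambda=G/\!\approx$ from the preorder, whereas the paper first passes through (i) to get a left-ordered quotient \emph{group} $G/N$; your route is slightly more direct.

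One small point you should make explicit: your construction on the finite--cofinite algebra tacitly assumes $\Lambda$ is infinite (otherwise ``finite'' and ``cofinite'' collide). This follows because $G$ acts transitively by order-preserving bijections on $\Lambda$ with $|\Lambda|\ge 2$, and the only order-automorphism of a finite chain is the identity; it is worth a sentence.
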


\begin{cor}\label{cor:wk-strong}
Suppose that $G$ is a group acting on $\Omega$ and generated by elements of finite order.
If $P$ is a weakly $G$-invariant full conditional probability, then $P$ is strongly
$G$-invariant. If $\lessapprox$ is a weakly $G$-invariant total qualitative probability, then
it is strongly $G$-invariant.
\end{cor}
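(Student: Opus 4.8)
\emph{Proof proposal.} The plan is to deduce the corollary directly from Theorem~\ref{th:wk-strong}: I will check that a group generated by elements of finite order satisfies condition~(i) of that theorem, i.e.\ has no non-trivial quotient carrying a left order. Clauses (iii) and (iv) of Theorem~\ref{th:wk-strong} then apply verbatim, with $\scr F$ taken to be $\Power\Omega$ (which is automatically $G$-invariant), and those two clauses are exactly the two assertions of the corollary.

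The first ingredient is the standard fact that every left-orderable group $L$ is torsion-free. If $\le$ is a left order on $L$ and $g\in L$ with $g\ne e$, then by totality either $e<g$ or $g<e$. In the first case, left-multiplying $e<g$ by $g$ gives $g<g^2$, and iterating yields $e<g<g^2<\cdots$, so $g^n\ne e$ for all $n\ge 1$; the case $g<e$ is symmetric. Hence no element of $L$ other than $e$ has finite order.

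The second ingredient is that the hypothesis ``generated by elements of finite order'' is inherited by quotients: if $G$ is generated by a set $S$ of elements of finite order and $N$ is a normal subgroup, then $G/N$ is generated by the cosets $sN$ with $s\in S$, and each $sN$ has order dividing that of $s$, hence is again of finite order. Putting the two ingredients together: were some quotient $G/N$ to admit a left order, it would be torsion-free, yet it is generated by torsion elements, so each generator would be the identity and $G/N$ would be trivial. Thus $G$ has no non-trivial left-orderable quotient, which is condition~(i) of Theorem~\ref{th:wk-strong}.

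By Theorem~\ref{th:wk-strong}, conditions (iii) and (iv) then hold: every weakly $G$-invariant total qualitative probability and every weakly $G$-invariant full conditional probability on any $G$-invariant algebra of subsets of $\Omega$ is strongly $G$-invariant. Applied to $\scr F=\Power\Omega$, this is precisely the corollary. There is no substantive obstacle here; the only point needing a moment's care is that the corollary does not mention the underlying algebra, but since the powerset is trivially $G$-invariant, Theorem~\ref{th:wk-strong} covers it, and one could equally state the corollary for an arbitrary $G$-invariant $\scr F$.
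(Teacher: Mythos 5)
Your proposal is correct and follows essentially the same route as the paper: verify that quotients inherit generation by finite-order elements, observe that a left-orderable group cannot contain a non-identity torsion element (via $e<g<g^2<\cdots\le g^n=e$), conclude that condition (i) of Theorem~\ref{th:wk-strong} holds, and invoke clauses (iii) and (iv). The only cosmetic difference is that you package the key step as ``left-orderable implies torsion-free'' rather than arguing directly on a non-trivial group generated by torsion elements, which is the same computation.
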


Here, an element $g$ has finite order $n$ provided that $g^n=e$, the identity element.
A group $G$ is generated by a subset $S$ provided that every non-identity
element of $G$ is a finite product of elements of $S$. If a group is generated by elements of
finite order, then the same is true for every quotient of the group. But a non-trivial group generated by elements
of finite order cannot have a left order. For suppose that $g\ne e$, $g^n=e$ and $\le$ is a left order. Then either $e<g$ or $g<e$. If $e<g$,
then $g<g^2$ and $g^2<g^3$ and so on up to $g^{n-1}<g^n=e$, and so by transitivity $g<e$, a contradiction. If $g<e$, then $e<g^{-1}$, and
we run the previous argument with $g^{-1}$ in place of $g$. So, the corollary follows from condition (i) in the theorem.

Some very natural cases satisfy the
finite-order generating set condition. For instance, all rigid motions on the line, in the plane or on the circle
can be generated by reflections, which have order two. Similarly, the group of reversals of subsets of results in
coin-flipping setups is not only generated by elements of order two, but all non-identity elements have order two.

\begin{proof}[Proof of Theorem~\ref{th:wk-strong}]
If $G/N$ is a non-trivial quotient with a left order $\le$ for a normal subgroup $N$, then define $a \precapprox b$ if and only if $aN \le bN$ for $a,b\in G$.
This is clearly a non-trivial total preorder. Thus, not-(i) implies not-(ii). The converse is due to \cite{Cornulier13}, and a proof
is also given in print in \cite{PrussInvariance} as part of the proof of the main theorem. Thus, (i) and (ii) are equivalent. The equivalence of
(i) and (iv) is shown in \cite{PrussInvariance}.

We now show that not-(i) implies not-(iii). Let $\le$ be a non-trivial total left order on $\Omega=G/N$, and suppose that $G$
acts on $\Omega$ in the canonical way: $g(hN)=(gh)N$.
Let $\scr F$ be the algebra of all finite or co-finite subsets of $\Omega$. Define $A\lessapprox B$ just in case for every $x\in A-B$
there is a $y\in B-A$ such that $x\le y$.

The reflexivity of $\lessapprox$ is trivial as is the positivity condition $\varnothing \lessapprox A$, and the additivity condition is very easy.
Totality is also not hard. Suppose that we don't have $A\lessapprox B$. Then there is an element $x$ of $A-B$ such that
for no $y$ in $B-A$ do we have $x\le y$. By the totality of $\le$, for every $y$ in $B-A$ we must have  $y<x$, and so $B\lessapprox A$.

Transitivity is a bit harder.
Observe that if $A$ is finite and $B$ is co-finite, then $A<B$.  For by non-triviality, there is an element $x$ of $G/N$ such that
$e<x$, where $e$ is the identity element of $G/N$ (for there is an element $y\ne e$, and by totality either $e<y$ in which case we can
let $x=y$ or $y<e$ in which case we can let $x=y^{-1}$). Since $\le$ is a left order, it follows that $x^n<x^{n+1}$ for all $n$, and so by transitivity
$e< x< x^2<\cdots$.
In particular, we learn that $G$ is infinite, and so co-finite sets have infinitely many members.
Let $z$ be the largest element of $A$. Then since $B$ is co-finite, it must contain infinitely many of the elements of the form $x^n z$ for
$n>0$. Choose one such element $x^n z$. Since $e< x^n$, we have $z < x^n z$, so $A<B$ as $z=\max A$.

We need to show that if $A\lessapprox B$ and $B\lessapprox C$, then $A\lessapprox C$. Suppose first that we have shown this for all finite
$A$, $B$ and $C$. Now suppose that exactly one of $A$, $B$ and $C$ is co-finite. Since a co-finite set cannot be $\lessapprox$-smaller than
a finite set, the co-finite set must be $C$, and then we have $A<C$ by what we showed before. Next, suppose exactly two of the sets are
co-finite, and the third is finite. The finite set must then be $A$, and once again we have $A<C$. The remaining case is where all three
sets are co-finite. Let $D=A\cap B\cap C$. This is a co-finite set. Let $A'=A-D$, $B'=B-D$ and $C'=C-D$. These are finite sets, and
$A'\lessapprox B'$ if and only if $A\lessapprox B$, $B'\lessapprox C'$ if and only if $B\lessapprox C$ and $A'\lessapprox C'$ if and only
if $A\lessapprox C$. If we have transitivity for finite sets, we have transitivity for $A$, $B$ and $C$, then.

I now claim that if $A$, $B$ and $C$ are finite and $A\lessapprox B$, $B\lessapprox C$ and $C\lessapprox A$,
then $A=B=C$. To see this, let $D=(A-B)\cup (B-C)\cup (C-A)$.

If $D$ is empty then
$A\subseteq B\subseteq C\subseteq A$, so $A=B=C$. Now suppose $D$ is nonempty.
Let $x$ be the largest element of the finite set $D$. Renaming $A$, $B$ and $C$ if needed, we may suppose that $x\in A-B$.
Then there is an element $y$ of $B-A$ such that $x\le y$ since $A\lessapprox B$. We cannot have $x=y$,
so $x<y$.
This element $y$ cannot be a member of $B-C$, by maximality of $x$ within $D$. Since $y$ is a member of $B$, it
follows that $y$ must also be a member of $C$. But $y$ is not in $A$. Hence $y\in C-A$, which also violates the maximality of $x$.

Now, suppose that $A\lessapprox B$ and $B\lessapprox C$ and all three sets are finite. If we do not have
$A\lessapprox C$, then by totality we have $C\lessapprox A$, and so by what we have just proved we have $A=B=C$,
so $A\lessapprox C$ by reflexivity, a contradiction.

The weak $G$-invariance of $\lessapprox$ follows from the fact that $\le$ is a left preorder on $G$.
We now show that $\lessapprox$ is not strongly $G$-invariant. Choose $a,b$ in $G/N$ with $a<b$. Then $\{ a \}<\{ b \}$.
Write $a = xN$ and $b=yN$ for $x,y\in G$. Let $z=xy^{-1}$, so $zb=a$. Thus, $z \{ b \} = \{ a \} < \{ b \}$, contrary to strong $G$-invariance.

We now show that not-(iii) implies not-(ii), mirroring the analogous proof in \cite{PrussInvariance}. Let $\lessapprox$ be a weakly but not strongly $G$-invariant
total qualitative probability on a $G$-invariant algebra $\scr F$ of subsets of some set $\Omega$.
Then by lack of strong invariance, there is an $A\in\scr F$ and $g\in G$ such that $A\not\approx gA$. Define
$\precapprox$ on $G$ by $x\precapprox y$ if and only if $xA\lessapprox yA$. This is a total preorder
on the set $G$
and
it is non-trivial since $A\not\approx gA$. Moreover, if $h\in G$, and $x\precapprox y$, then $xA\lessapprox yA$,
and so by weak invariance $gxA\lessapprox gyA$, and hence $gx\precapprox gy$, so it is a total left preorder
on the group $G$.
\end{proof}

There are cases where there are weakly invariant qualitative probabilities on a powerset
but no strongly invariant ones. For instance, let $\Omega=G=\mathbb Z$ be the set of integers, acted
on by addition. Then $\mathbb Z$-invariance is translation invariance.
Now $\mathbb Z$ is finitely generated (since it's generated by the element $1$) and infinite,
and hence not locally finite, so by Theorem~\ref{th:main} there is no strongly invariant qualitative
probability on it. However, West~\cite{West20} has proven that there is a regular weakly $\mathbb Z$-invariant qualitative probability
on the powerset $\mathbb Z$ (the proof generalizes to any abelian group).

Nonetheless, the notion of weak invariance does not capture the symmetry notions that invariance is meant
to capture. For it turns out that any regular weakly translation-invariant qualitative probabilities
on the powerset of the integers $\mathbb Z$ exhibit significant skewing.

Let $L_n = \{ m \in \mathbb Z : m < n \}$ and $R_n = \{ m \in \mathbb Z : n \le m \}$ be left- and right-halves
of $\mathbb Z$ split at $n$.

\begin{prp}\label{prp:skew1} Suppose $\lessapprox$ is a regular weakly translation-invariant total qualitative
probability on the powerset of $\mathbb Z$. Then one of the following statements is true:
    \begin{itemize}
        \item[(i)] For every $m$ and $n$, $L_m<R_n$
        \item[(ii)] For every $m$ and $n$, $L_m>R_n$.
    \end{itemize}
\end{prp}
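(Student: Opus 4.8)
The plan is to exploit translation invariance to show that the ordering of $L_m$ versus $R_n$ cannot depend on the split points $m,n$, and then to rule out the "mixed" possibility where $L_m < R_n$ for some pair but $L_{m'} > R_{n'}$ for another. First I would observe that $L_{n}$ and $L_{n+1}$ differ by the single point $\{n\}$ (i.e.\ $L_{n+1} = L_n \cup \{n\}$), and similarly $R_{n+1} = R_n - \{n\}$, so that by regularity and additivity $L_n < L_{n+1}$ and $R_{n+1} < R_n$ for every $n$. Moreover $L_n$ is a translate of $L_0$ and $R_n$ is a translate of $R_0$, so weak invariance alone does not directly equate their $\lessapprox$-values, but it does give us something: if $L_m \lessapprox R_n$ for one pair $(m,n)$, applying the translation by $n'-n$ (which sends $L_m \mapsto L_{m+n'-n}$ and $R_n \mapsto R_{n'}$) yields $L_{m+n'-n}\lessapprox R_{n'}$. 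Combining this with the monotonicity $L_{m'} \le L_{m+n'-n}$ whenever $m' \le m+n'-n$, we see that $L_m < R_n$ for a single pair already forces $L_{m'} < R_{n'}$ for all pairs with $m'$ small enough relative to $n'$; the remaining work is to upgrade "small enough" to "all".

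To do this I would argue by contradiction: suppose neither (i) nor (ii) holds. Since $\lessapprox$ is total, for each pair $(m,n)$ exactly one of $L_m < R_n$, $L_m \approx R_n$, $L_m > R_n$ holds. The key step is to show $L_m \approx R_n$ is impossible: if $L_m \approx R_n$, then since $L_m$ and $R_n$ are disjoint for $m \le n$ (or have controlled overlap otherwise) and $L_m \cup R_m = \mathbb Z$, a translation argument plus additivity would let us build a strictly increasing-then-equal chain contradicting regularity. More concretely, from $L_m \approx R_n$, translating gives $L_{m+1} \approx R_{n+1}$; but $L_m < L_{m+1}$ and $R_{n+1} < R_n$, so $R_n \approx L_m < L_{m+1} \approx R_{n+1} < R_n$, a contradiction by transitivity. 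Hence for every pair we have strict inequality in one direction or the other. Now define, for fixed $n=0$, the set $S = \{ m \in \mathbb Z : L_m < R_0 \}$; by monotonicity of $L_m$ this is a downward-closed set, and by the above it is the complement of $\{ m : L_m > R_0 \}$. If $S = \mathbb Z$ we are essentially in case (i) and if $S = \varnothing$ in case (ii), so for a contradiction assume $S$ is a nonempty proper initial segment, i.e.\ there is a largest $m_0$ with $L_{m_0} < R_0$ and $L_{m_0+1} > R_0$.

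The main obstacle is deriving a contradiction from this "threshold" configuration, and here is where I expect the real work: I would translate the relation $L_{m_0} < R_0$ by $1$ to get $L_{m_0+1} < R_1$, and translate $L_{m_0+1} > R_0$ by $-1$ to get $L_{m_0} > R_{-1}$. Combining $R_{-1} < L_{m_0} < R_0$ with $R_0 < L_{m_0+1} < R_1$ (using $R_0 < R_{-1}$ reversed appropriately — note $R_{-1} = R_0 \cup \{-1\}$ so $R_0 < R_{-1}$) gives a chain that forces the "gap" between $L_{m_0}$ and $L_{m_0+1}$, which is just the single point $\{m_0\}$, to straddle the single point $\{-1\}$ worth of difference between $R_0$ and $R_{-1}$; iterating the translation indefinitely in both directions then produces $R_k < L_{m_0+k} < R_{k}$-type contradictions, or more cleanly shows that the singleton $\{m_0\}$, which has fixed $\lessapprox$-class under translation, would have to exceed and be exceeded by the same translated difference set, violating regularity. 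I would carry this out by writing $R_0 - L_{m_0} = R_0 \cup L_{m_0}^c$-type bookkeeping — but the cleanest route is: $L_{m_0} < R_0$ and (translating by $1$) $L_{m_0+1} < R_1 < R_0$, so $L_{m_0+1} < R_0$, contradicting $L_{m_0+1} > R_0$. That last line is in fact the whole contradiction, so the threshold case collapses immediately once we know $R_1 < R_0$; thus the only subtlety is the ruling-out of $\approx$, which the monotonicity-plus-translation squeeze handles, and the rest is bookkeeping. I would present it in that order: monotonicity of the $L_n$ and $R_n$ chains, impossibility of $\approx$, translation-homogeneity of the strict relation, and finally the one-line contradiction in the threshold case.
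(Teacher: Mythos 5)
Your proposal is correct and rests on exactly the same two ingredients as the paper's proof — strict monotonicity of the chains $L_m$ and $R_n$ (from regularity plus additivity) and weak invariance applied to translate \emph{both} halves by the same amount, so that $L_{m_0}<R_0$ forces $L_{m_0+1}<R_1<R_0$ — so it is essentially the paper's argument reorganized as a threshold/initial-segment analysis (with an extra, valid, lemma ruling out $L_m\approx R_n$) rather than the paper's direct two-case reduction. The only loose end is that $S=\mathbb Z$ (resp.\ $S=\varnothing$) gives you $L_m<R_0$ (resp.\ $L_m>R_0$) for all $m$ only, and one further application of weak invariance is needed to replace $R_0$ by an arbitrary $R_n$ and obtain (i) (resp.\ (ii)) verbatim.
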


In other words, $\lessapprox$ must either favor all the right halves over all the left halves, or vice versa.
This also shows that if our regular total qualitative probability $\lessapprox$ has weak translation
invariance, it does not have weak reflection invariance for \textit{any} reflection $\rho$ (i.e., with respect to the
group consisting of $\rho$ and the identity). For weak reflection invariance under $\rho$ would imply
strong reflection invariance under $\rho$ by Theorem~\ref{th:wk-strong}, which would violate both (i)
and (ii) in Proposition~\ref{prp:skew1}.

\begin{proof}[Proof of Proposition~\ref{prp:skew1}] By regularity and additivity, we have the strict
monotonicity properties that $L_m<L_n$ and $R_m>R_n$ whenever $m<n$. Suppose (i) is false. Then
$L_m \gtrapprox R_n$ for some $m$ and $n$ by totality.

Suppose first that $m<n$. Then $L_n > L_m \gtrapprox R_n$,
so $L_n > R_n$. By weak translation invariance, it follows that for every $k$ we have $L_k > R_k$
(since $L_k = (k-m)+L_m$ and $R_k = (k-m)+R_m$). Now fix any $j$ and $k$. If $j \ge k$, then $L_k > R_k\gtrapprox R_j$
by monotonicity. If $j<k$, then also by monotonicity $L_k > L_j > R_j$. So, we have (ii).

Now suppose that $m\ge n$. Then $L_m \gtrapprox R_n \gtrapprox R_m$ by monotonicity. By weak invariance (translating to
the left by one), we have $L_{m-1} \gtrapprox R_{m-1}$. But $R_{m-1} > R_m$, so $L_{m-1} > R_m$. Letting
$m' = m-1$ and $n' = m$, we have $L_{m'} \gtrapprox R_{n'}$ and $m'<n'$. By the previous case (where $m<n$) we
get (ii) again.
\end{proof}

In fact, if we want, we can use a modification of West's method to manufacture even more extreme cases of
skewage that are compatible with weak invariance, both for qualitative probabilities and for
full conditional probabilities.

\begin{prp}\label{prp:skew2}
There is a weakly translation-invariant total qualitative probability $\lessapprox$ on the powerset
of $\mathbb Z$ such that $\{ n \} \approx \{ m \}$ for all $n$ and $m$, and such that if
$B$ contains infinitely many positive integers and $A$ only finitely many positive integers, then
$A<B$.
\end{prp}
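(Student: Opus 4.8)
The plan is to give an explicit construction in the spirit of West's ultrafilter-based method. Fix a nonprincipal ultrafilter $\mathcal U$ on $\N$ and form the ultrapower ${}^*\Z = \Z^{\N}/\mathcal U$, a totally ordered abelian group. The standard copy of $\Z$ is a \emph{convex} subgroup of ${}^*\Z$: a hyperinteger lying between two standard integers is bounded, hence equal on a $\mathcal U$-large set to a single standard value, hence standard. Therefore the quotient group $Q = {}^*\Z/\Z$ carries a total order, in which the class $\bar x$ is $>\bar 0$ precisely when $x$ is a positive infinite hyperinteger and $\bar x = \bar y$ precisely when $x-y$ is finite.

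For $A\subseteq\Z$ put $c_A(n) = |A\cap\{1,\dots,n\}|$, let $\mu^+(A) = [c_A]\in{}^*\Z_{\ge0}$, and let $\nu^+(A)\in Q$ be the image of $\mu^+(A)$. The two facts that drive everything are: (a) $\mu^+$ has bounded edge effects, namely $|c_{A+k}(n) - c_A(n)|\le 2|k|$ for all $n$, so $\mu^+(A+k)-\mu^+(A)$ is finite and hence $\nu^+(A+k)=\nu^+(A)$; and (b) $\mu^+$ is finitely additive, since $c_{A\cup C} = c_A + c_C$ pointwise when $A\cap C=\varnothing$, so $\nu^+$ is a finitely additive $Q$-valued measure. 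Define $A\lessapprox B$ to hold iff $\nu^+(A)\le\nu^+(B)$ in $Q$.

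It remains to verify the listed properties, all of which are then routine. (Q1) and totality hold because $\le$ on $Q$ is a total order and $\nu^+$ is a well-defined function; (Q2) holds because $\mu^+(A)\ge0$; and (Q3) follows from (b): if $A\cap C=B\cap C=\varnothing$ then $A\lessapprox B \iff \nu^+(A)\le\nu^+(B)\iff \nu^+(A)+\nu^+(C)\le\nu^+(B)+\nu^+(C)\iff \nu^+(A\cup C)\le\nu^+(B\cup C)\iff A\cup C\lessapprox B\cup C$, using translation-invariance of the order on the ordered group $Q$. Weak (indeed strong) translation invariance is exactly (a). If $A$ has only finitely many positive integers then $c_A$ is eventually constant, so $\mu^+(A)$ is finite and $\nu^+(A)=\bar0=\nu^+(\varnothing)$; in particular every finite set, and so every singleton, is $\approx\varnothing$, giving $\{n\}\approx\{m\}$. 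If $B$ has infinitely many positive integers then $c_B(n)\to\infty$, so $\mu^+(B)$ is positive infinite and $\nu^+(B)>\bar0$; hence $A<B$. (There is no conflict with Theorem~\ref{th:main}: this $\lessapprox$ is strongly invariant but not regular.)

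I do not expect a genuine obstacle; the only steps needing care are the convexity of $\Z$ in ${}^*\Z$ (so that $Q$ really is a totally ordered group) and the bounded-edge-effect estimate in (a), after which everything is bookkeeping. If one prefers a less degenerate witness, one can instead work in the lexicographic product $Q\times_{\mathrm{lex}}Q$ with the pair $(\nu^+,\nu^-)$, where $\nu^-$ is built in the same way from $n\mapsto|A\cap\{-n,\dots,0\}|$; this is still a total, weakly invariant qualitative probability with the required properties, and now only sets with finitely many positives \emph{and} finitely many non-positives collapse to $\varnothing$.
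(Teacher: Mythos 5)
Your construction is correct for the proposition as literally stated, but it takes a genuinely different route from the paper's and yields a strictly weaker witness. The paper builds $\lessapprox$ from a ``decent cone'' in the space of bounded functions on $\Z$: starting from the cone of functions with property $X$, saturating under convolution with finitely supported non-negative functions, and completing by Zorn's Lemma; condition (DC4) of a decent cone then forces \emph{regularity}, so the paper's $\lessapprox$ satisfies $\varnothing<A$ for every nonempty $A$. Your argument instead sends the counting function $c_A$ into the ultrapower ${}^*\Z$ and quotients by the convex subgroup $\Z$, getting a finitely additive measure $\nu^+$ valued in the totally ordered group $Q={}^*\Z/\Z$; the verifications you list (convexity of $\Z$ in ${}^*\Z$, the $2|k|$ edge estimate, additivity of $A\mapsto c_A$) all go through, and the resulting relation is a total, weakly (indeed strongly) translation-invariant qualitative probability with $\{n\}\approx\{m\}$ and $A<B$ as required. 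The trade-off is that your $\lessapprox$ identifies every set with finitely many positive integers with $\varnothing$ --- and must, since it is strongly invariant and Theorem~\ref{th:main} excludes regular strongly invariant qualitative probabilities on $\Power\Z$ --- so the equiprobability of singletons holds only in the degenerate form $\{n\}\approx\varnothing\approx\{m\}$, and your lexicographic variant still collapses all finite sets to $\varnothing$. Since this proposition lives in a discussion of \emph{regular} weakly invariant probabilities (Proposition~\ref{prp:skew1} presupposes regularity, and the fairness discussion turns on nonempty events being strictly likelier than impossible ones), the paper's cone machinery buys exactly what your construction forgoes: a regular example of the skewing. Your approach buys a more explicit and computation-friendly construction (an ordered-group-valued density, with only an ultrafilter rather than a Zorn completion of cones), and it is a clean way to exhibit the phenomenon; but to recover the version the paper actually leans on, regularity has to be restored, which is precisely what the property-$X$ cone and Lemma~\ref{lem:cone} are engineered to do.
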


\begin{prp}\label{prp:skew2p}
There is a weakly translation-invariant full conditional probability $\lessapprox$ on $\mathbb Z$
such that for all $n$ and $m$, $P(\{n\}\mid \{n,m\})=P(\{m\},\{n,m\})$, and such that if
$B$ contains infinitely many positive integers and $A$ only finitely many positive integers, then
$P(A\mid A\cup B)=0$ and $P(B\mid A\cup B)=1$.
\end{prp}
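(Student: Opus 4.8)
The plan is to produce $P$ as a \emph{strongly} translation‑invariant full conditional probability (which is a fortiori weakly translation‑invariant), by specializing the finite‑algebra machinery behind Lemma~\ref{lem:complete}: one approximates by full conditional probabilities $P_{\scr F}$ on finite algebras $\scr F\subseteq\Power{\mathbb Z}$, built as in Lemma~\ref{lem:finite} but with the constituent invariant measures chosen so as to drive mass towards $+\infty$, and then passes to a subnet limit. Throughout, $\Omega^*=\Omega=G=\mathbb Z$ acts on itself by translation, and ``bounded above'' abbreviates ``having only finitely many positive integers'' (equivalently, $\subseteq\mathbb Z_{\le M}$ for some $M$). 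The one new ingredient needed is the following, call it $(*)$: \emph{for every $E\subseteq\mathbb Z$ with infinitely many positive integers there is a translation‑invariant finitely additive measure $\nu\colon\Power{\mathbb Z}\to[0,\infty]$ with $\nu(E)=1$ and $\nu(S)=0$ for every bounded‑above $S$.}

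To prove $(*)$, apply Tarski's Theorem~\cite[Cor~11.2]{WT16} to the infinite set $E\cap\mathbb Z_{\ge 1}$---legitimate since $\mathbb Z$ is abelian and therefore has no paradoxical subsets (cf.\ the discussion after Theorem~\ref{th:full-cond})---obtaining a translation‑invariant $\mu\colon\Power{\mathbb Z}\to[0,\infty]$ with $\mu(E\cap\mathbb Z_{\ge 1})=1$. Since $E\cap\mathbb Z_{\ge 1}$ is infinite with finite $\mu$‑measure, finite additivity and monotonicity force $\mu$ to vanish on every finite set. Now set $\nu(S)=\lim_{k\to\infty}\mu(S\cap\mathbb Z_{\ge k})$; the limit exists in $[0,\infty]$ because $k\mapsto\mu(S\cap\mathbb Z_{\ge k})$ is nonincreasing. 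Finite additivity of $\nu$ follows from that of $\mu$ (limits of nonincreasing sequences add), translation‑invariance follows by reindexing the limit via $(S+j)\cap\mathbb Z_{\ge k}=(S\cap\mathbb Z_{\ge k-j})+j$, the vanishing on bounded‑above $S$ holds since $S\cap\mathbb Z_{\ge k}=\varnothing$ for large $k$, and $\nu(E)=\lim_k\mu(E\cap\mathbb Z_{\ge k})=1$ because $\mu(E\cap\mathbb Z_{<k})$ is always the measure of a finite set. Taking $E=\mathbb Z$ in $(*)$ yields in particular a translation‑invariant \emph{probability} measure $\mu_0$ on $\Power{\mathbb Z}$ that vanishes on every bounded‑above set.

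Now re‑run the construction in the proof of Lemma~\ref{lem:finite} on each finite algebra $\scr F$, fixing its free parameters as follows: take the initial invariant probability measure to be the restriction of $\mu_0$ to $\scr F$; and at each later stage, with $E_{n+1}$ the largest $\mu_n$‑null member of $\scr F$, choose the next measure $\nu$ via $(*)$ applied to $E_{n+1}$ when $E_{n+1}$ has infinitely many positive integers (so $\nu$ also annihilates all bounded‑above sets), and otherwise choose any translation‑invariant $\nu$ with $\nu(E_{n+1})=1$ (which exists by Tarski's Theorem). Every verification in Lemma~\ref{lem:finite} used only that the initial measure is an invariant probability measure and that each $\nu$ is an invariant measure with $\nu(E_{n+1})=1$, so the construction goes through unchanged and yields a strongly $\mathbb Z$‑invariant full conditional probability $P_{\scr F}$ on $\scr F$. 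Feeding the family $(P_{\scr F})_{\scr F}$ into the compactness argument of Lemma~\ref{lem:complete} (the directed set of finite algebras, the padded functions, the convergent subnet) produces a strongly---hence weakly---$\mathbb Z$‑invariant full conditional probability $P$ on $\Power{\mathbb Z}$.

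Finally, read off the two special properties. If $\{n\},\{m\}\in\scr F$ then, since $\{m\}=\{n\}+(m-n)$ and every measure in the chain for $\scr F$ is $\mathbb Z$‑invariant on $\scr F$, each assigns $\{n\}$ and $\{m\}$ equal value, whence $P_{\scr F}(\{n\}\mid\{n,m\})=\tfrac12$; letting $\scr F$ range over the subnet (which eventually contains $\{n\}$ and $\{m\}$) gives $P(\{n\}\mid\{n,m\})=\tfrac12=P(\{m\}\mid\{n,m\})$. For the skew property, let $B$ have infinitely many positive integers and $A$ only finitely many, and set $E=A\cup B$, which has infinitely many positive integers; for any finite algebra $\scr F$ containing $A$ and $B$ I claim $P_{\scr F}(A\mid E)=0$. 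Indeed, as long as $E$ is still null in the chain for $\scr F$ it is contained in the current $E_{n+1}$, which therefore also has infinitely many positive integers, so the next measure was chosen via $(*)$ and annihilates bounded‑above sets; hence the level $n(E)$ at which $E$ first acquires finite positive measure has associated measure vanishing on the bounded‑above set $A\subseteq E$ (this measure is $\mu_0$ when $n(E)=1$, and otherwise agrees with a $(*)$‑measure on all subsets of $E_{n(E)}\supseteq E$). So $P_{\scr F}(A\mid E)=0$, and the identical argument applied to $E\setminus B\subseteq A$ gives $P_{\scr F}(E\setminus B\mid E)=0$ and hence $P_{\scr F}(B\mid E)=1$; passing to the subnet limit yields $P(A\mid A\cup B)=0$ and $P(B\mid A\cup B)=1$. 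The one genuinely new point is $(*)$---invariant measures that simultaneously normalize a prescribed unbounded‑above set and kill every bounded‑above set; granting it, the remaining work is routine bookkeeping with results already in hand, the only thing to monitor being that the skew is imposed \emph{uniformly} across the finite‑algebra approximations, which the argument above guarantees.
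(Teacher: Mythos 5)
Your proof is essentially correct, but it takes a genuinely different route from the paper's. The paper obtains Proposition~\ref{prp:skew2p} from the decent-cone machinery of the Appendix: it reuses the cone $C$ built for Proposition~\ref{prp:skew2}, turns it into a total vector-space preorder on $\scr B$, and extracts a coherent exchange rate $c(A,B)=\gamma(1_A,1_B)$ via Lemma~\ref{lem:gamma}, reading off weak invariance from $(1_B-1_A)*\delta_x=1_{x+B}-1_{x+A}$. You instead re-run the R\'enyi-chain construction of Lemma~\ref{lem:finite} on finite algebras with the invariant measures chosen to annihilate all sets having only finitely many positive integers, and pass to a subnet limit as in Lemma~\ref{lem:complete}. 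Your key new ingredient $(*)$ is sound: Tarski's theorem applies since $\mathbb Z$ is supramenable, translation invariance forces $\mu$ to vanish on finite sets once it normalizes an infinite set, and the ``push to $+\infty$'' limit $\nu(S)=\lim_k\mu(S\cap\mathbb Z_{\ge k})$ is indeed a translation-invariant finitely additive measure normalizing $E$ and killing every bounded-above set (the one imprecision is that $E\cap\mathbb Z_{<k}$ need not be finite; what matters is that $E\cap[1,k)$ is, so that $\mu(E\cap\mathbb Z_{\ge k})=\mu(E\cap\mathbb Z_{\ge1})=1$). The bookkeeping that the stage-$n(E)$ measure annihilates $A$ whenever $E\supseteq A$ has infinitely many positive integers is also correct. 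What your approach buys is strictly more than the proposition asks: a \emph{strongly} invariant example, which incidentally sharpens the paper's remark that full conditional probabilities only partially respect regularity. What the paper's approach buys is a single construction serving both Propositions~\ref{prp:skew2} and~\ref{prp:skew2p}, with the qualitative and conditional objects arising from the same cone.

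The one step you assert without argument is that strong invariance implies weak invariance (``a fortiori''). This is not immediate from the definitions and deserves a proof, though it is true here: for $A\subseteq B$ and $C=B\cup gB$, axiom (C2) gives $P(A\mid C)=P(B\mid C)P(A\mid B)$ and $P(gA\mid C)=P(gB\mid C)P(gA\mid gB)$; strong invariance identifies $P(A\mid C)=P(gA\mid C)$ and $P(B\mid C)=P(gB\mid C)$, and since $P(B\mid C)+P(gB\mid C)\ge P(C\mid C)=1$ forces $P(B\mid C)\ge 1/2>0$, one may cancel to get $P(gA\mid gB)=P(A\mid B)$. (Equivalently, use Lemma~\ref{lem:cP-inv} and the identity $c(B,gB)=1$.) With that supplied, your argument is complete.
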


The proofs are given in the Appendix.

The probabilities in these propositions strongly favor infinite sets of positive integers. For instance,
both of them favor the set of positive integers over the set of negative integers. That is no surprise
in the qualitative case: by Proposition~\ref{prp:skew1}, $\lessapprox$ has to favor the positive integers
over the negative integers, or vice versa. But it gets worse. For we can make $B$ be any sparse infinite
set of positive integers, say $\{ 2^{2^n} : n\ge 1 \}$, and let $A$ be all negative integers, and
then the qualitative measure in Proposition~\ref{prp:skew2} still makes it be that $A<B$ and the full conditional
probability of Proposition~\ref{prp:skew2p} makes it be that $P(A\mid A\cup B)=0$ and $P(B\mid A\cup B)=1$.

There is no broadly accepted concept of fairness for an infinite lottery.
That any two individual tickets are equally likely to win is generally taken to be a necessary condition.
But intuitively, a significant degree of symmetry and lack of systematic bias is also called for (\cite{Norton2018}
recommends invariance with respect to \textit{all} permutations, but that is likely too strong).  A lottery with
our radically skewed probabilities that nonetheless treat all individual integers as equiprobable does not intuitively
appear to be fair.  Thus, weak translation invariance plus equiprobability of singletons does not appear to be sufficient to capture
our intuitions of fairness and symmetry. We would probably do better to focus on strong invariance---but we saw that
that's harder to get.

\section{Partial actions}\label{sec:partial}
There is some literature on the partial actions of groups which captures the same
phenomenon as we captured above by letting $G$ act on a superset $\Omega^*$ of $\Omega$. Specifically, a partial
action of a group $G$ on a space $\Omega$ is a collection of one-to-one functions $(\theta_g)_{g\in G}$
defined on subsets (possibly empty) of $\Omega$, such that:
\begin{enumerate}
\item[(PA1)] $\theta_e$ (where $e$ is the
identity in $G$) is the identity function on $\Omega$,
\item[(PA2)] $\Dom \theta_g = \Range \theta_{g^{-1}}$ for all $g$
\item[(PA3)] if $g,h\in G$ and $x\in\Omega$ are such that $x\in\Dom \theta_g$
and $\theta_g(x)\in\Dom \theta_h$, then $x\in\Dom \theta_{hg}$ and $\theta_{hg}(x)=\theta_h(\theta_g(x))$.
    (Cf.\ \cite{Exel})
\end{enumerate}

For a (full) group action of $G$ on a space $\Omega^*$ containing $\Omega$, we can define a
partial action on $\Omega$ by taking $\theta_g$ to be a function on $\Omega^*$ with domain $(g^{-1}\Omega)\cap\Omega$
and defined by $\theta_g(x)=gx$. We say that the partial action $(\theta_g)_{g\in G}$ is then a
restriction of the full action of $G$ on $\Omega^*$.

Every partial action is a restriction of a full
action~\cite[Theorem~1.3.5]{Exel}.
Consequently, our results about actions on a larger containing set $\Omega^*$ are equivalent to results about partial group
actions on $\Omega$ itself.

\section{Philosophical remarks}
\subsection{Some examples}
Let us restrict our attention to the stronger forms of invariance under $G$. Then
it is strictly easier to get invariant full conditional probabilities than to get either invariant regular hyperreal
or qualitative (regardless whether total or partial) probabilities for all subsets of $\Omega$. The hyperreal and qualitative probabilities are equally
hard to get, despite the fact that not every qualitative probability derives from a hyperreal probability.
Our characterizations show that the difficulty in getting invariance under symmetries always has to do with subsets that are
countable, and hence classically measurable.

In the table we have a summary of some examples, some which were already discussed, and most of which
follow quickly from Theorems~\ref{th:full-cond} and \ref{th:main}.

\begin{figure}
\caption{Existence of (strongly) invariant non-classical probabilities.}
\begin{center}
\newcolumntype{L}[1]{>{\raggedright\let\newline\\\arraybackslash\hspace{0pt}}m{#1}}
\begin{tabular}{|L{2.4cm}|L{2.4cm}|L{2.2cm}|L{2.2cm}|L{2.2cm}|}
\hline
\linespread{1}
\textbf{Case} & \textbf{Symmetries} & \textbf{Full conditional} & \textbf{Regular hyperreal} & \textbf{Regular qualitative} \\
\hline
finite space & any & yes & yes & yes \\
\hline
infinite lottery on $\Z$ & translations & yes & no & no \\
\hline
infinite lottery on $\Z$ & reflections & yes & no & no \\
\hline
infinite lottery on $\Z$ & all permutations & no & no & no \\
\hline
infinite lottery on any set & permutations affecting only finite subsets & yes & yes & yes \\
\hline
bidirectional infinite sequence of coin flips & translations & yes & no & no \\
\hline
bidirectional infinite sequence of coin flips & translations and finite subset reversals & no & no & no \\
\hline
arbitrary infinite sequence of coin flips & permutations & no & no & no \\
\hline
arbitrary infinite sequence of coin flips & reversal of subset of results & yes & yes & yes \\
\hline
$[0,1]$ & translations & yes & no & no \\
\hline
circle/spinner & rotations & yes & no & no \\
\hline
surface of sphere & rotations & no & no & no \\
\hline
subset of $\R^n$ containing cube, $n\ge 2$ & rigid motions & no & no & no \\
\hline
$\R^n$, $n\ge 1$ & translations & yes & no & no \\
\hline
$\R^n$, $n\ge 1$ & translations and reflections of coordinates & yes & no & no \\
\hline
\end{tabular}
\end{center}
\end{figure}

In the table, all the ``yes'' entries under ``Full conditional'' are due to the symmetry group being
supramenable. In all but the first and last cases, the lottery reflection case and the lottery with finite subset permutations, this is due to
its being abelian. In the last case the result is due to \cite[Theorem~3]{PrussPopper}, and the group
in the lottery reflection case is just a subgroup of the group in the last case. The case of the lottery with
finite subset permutations is due to Ian Slorach and clearly satisfies local finiteness of the group action.

A sufficient condition to lack any of the three types of invariant probabilities for all subsets is that $\Omega$ has a
$G$-paradoxical subset. The triple ``no'' in the sphere case is due to the Banach-Tarski paradox and in the
set-containing-cube (where a two-dimensional ``cube'' is a square) case follows from the construction of a
bounded paradoxical set in two dimensions by \cite{Just88}.

The triple ``no'' for the infinite lottery case follows from the fact that if $G$ is any countable non-supramenable
group acting on itself (e.g., the free group on two elements~\cite[Theorem~1.2]{WT16}), then the answer will be
a triple ``no''. Then via the bijection between $G$ and $\Z$, we can also take such a group $G$ to act on $\Z$, with each element's
action being a permutation, and we will still have the triple ``no''.

The triple negative result for the infinite
coin toss case under permutations then follows from the fact that if we fix a countably infinite subset $I$ of
the coins, and let the subset $\Omega_0$ be the coin toss results that are tails everywhere except for one heads
result in $I$, then we can embed $\Omega_0$ in the lottery on $I$ (for $\omega\in \Omega_0$, let $\phi(\omega)\in I$ be
the position of the unique heads), and $I$ bijects with $\Z$.

Finally, the triple negative result for the bidirectionally infinite coin toss case with finite subset reversals
is a modification of \cite{Williamson07}. Let $H_n$ ($T_n$) be the event of getting heads (tails) on toss $n$ and let $H_n^+$ be the
event of getting heads on all the tosses $n,n+1,\dots$. Then $E=H_2^+$ can be partitioned into $A=H_1\cap H_2^+$ and $B=T_1\cap H_2^+$.
Moreover, shifting $A$ to the right yields $E$, while reversing the result of toss $1$ in $B$ yields $A$, and shifting it to the
right yields $E$. Thus, $E$ is equidecomposable with both $A$ and $B$, and hence paradoxical, so we have a triple negative row.

All the other entries in the table were either discussed above in the paper, or are easy consequences of results earlier  discussed in this paper.

With the possible exception of the arbitrary infinite sequences of coin flips under permutations, it can be easily checked
that all the ``no'' entries in the above table can be proved without any use of the Axiom of Choice. In the case of
infinite sequence of coin flips under permutations, the Axiom of Choice is only used to show that the infinite set of coins
has a countable subset, which only needs the very plausible Axiom of Countable Choice---and it won't need any Choice if the infinite
set is itself countable.

Hence, denial of the Axiom of Choice does not appear to be a helpful tool to saving symmetries, especially as the proofs of the
existence of the non-classical probabilities typically make use of some version of the Axiom of Choice (see also \cite{Pruss14}).

\subsection{Intuitions}
There are multiple considerations that apply when choosing between probabilistic frameworks. Thus, non-classical approaches
have advantages vis-\`a-vis regularity or being everywhere defined.\footnote{I am grateful to an anonymous reader for this point.}
On the other hand, the classical approach has the advantage of being able to preserve significantly more symmetries.

The table above may seem to suggest that the full conditional probability framework has some advantage over the hyperreal and
qualitative frameworks due there being so many positive answers in the full conditional
column of the table. Rows that have ``no'' in the hyperreal and qualitative columns but ``yes'' in the full conditional column
correspond to cases where there is a subset $E$ of the space that is equidecomposable with a proper subset $E_1$. The reason that
full conditional probabilities can handle such cases is because there is an important sense in which full conditional probabilities
only partly do justice to intuitions about regularity. The standard way to generate a probability comparison with full conditional
probabilities is to say that $A\lessapprox B$ just in case $P(A\mid A\cup B)\le P(B\mid A\cup B)$. It is easy to check that with
this comparison, we have the regularity condition that $\varnothing<A$ for every nonempty $A$. But we need not have the stronger
regularity condition that if $A\subset B$ then $A<B$, and that is what allows full conditional probabilities to handle a set $E$
that is equidecomposable with a proper subset $E_1$. In such a case, invariance will ensure that $P(E_1\mid E_1\cup E)=1=P(E\mid E_1\cup E)$,
so $E_1\approx E$, contrary to the stronger regularity condition. (Note that $\lessapprox$ in this case will not satisfy the additivity
condition for qualitative probabilities.)

One philosophically interesting thing to note about the table is that in all of the cases, we seem to be able to imagine
probabilistic situations where the requisite symmetries are intuitively correct, and the cases where non-classical
probabilities of some given type cannot allow for these symmetries do not appear to differ in any significant way with respect
to that intuition from some of the cases where the symmetries can be had.

For instance, one of the two infinite cases in the table where regular qualitative/hyperreal probabilities can be defined invariantly is
the infinite collection of
coin flips, with symmetries being reversals of a selected subset. But while invariance under such symmetries is intuitive, it
is just as intuitive that in a bidirectionally infinite ``horizontal'' sequence of coin flips $...,X_{-2},X_{-1},X_0,X_1,X_2,...$,
our probabilities should be invariant under horizontal shifts of results, and yet qualitative/hyperreal probabilities cannot
be invariantly defined then.

Likewise, we can have full conditional probabilities for a bidirectionally infinite sequence of coin tosses invariant under
reversal of any finite subset of results (or even an infinite one), and we can have ones that are invariant under any translation,
but not under both translations and reversals, as we saw above.

Similarly, there seems to be little difference between throwing a dart at random at the interval $[0,1]$ and expecting translational
symmetry and throwing a dart at random at $[0,1]^2$ and expecting translational and rotational symmetry, while in the former case
we have full conditional probabilities and in the latter we do not.

And there is nothing particularly special about translations in the case of an infinite sequence of coin tosses: any permutation
of the coins should just as intuitively preserve probabilities as a translation, and yet for translations we have
``yes'' for full conditional probabilities and for general permutations a triple negative.

In any case, it seems that regardless of whether we prefer full conditional, regular hyperreal or regular qualitative probabilities,
we need to abandon our symmetry intuitions in some but not other cases in ways that may seem intuitively \textit{ad hoc}. This
is an advantage for the classical framework of real-valued probabilities, where not all subsets have defined probabilities and where
we lose regularity, but at least it is much easier to get symmetries. Lebesgue measure on Euclidean space is always translation-
and rotation-invariant, and product measures for
infinite coin-flip situations will be invariant under all shifts, and indeed under all permutations of the coins.

It is also worth noting that the sets that ``block'' the existence of strongly invariant probabilities can always be taken to be countable,
and hence are all going to be measurable in the context of classical probabilities, though that measure may be zero. We can either say that
the greater resolving power of the non-classical approaches brings to light difficulties with these sets that classical probability ignores,
or we might think that the classical approach is superior in allowing for the symmetries.

\subsection{Closing remarks}
But this is philosophical speculation, and perhaps readers will find dissimilarities between intuitions about symmetry
of probabilities that align with the necessary and sufficient conditions given by the theorems of this paper. A further
area for future research is to find ways to measure the degree of deviation from symmetry and see whether deviations of
that degree are acceptable.\footnote{I am grateful to an anonymous reader for this suggestion.}

In any case, the main point of this paper is to provide the technical
characterizations to help inform such philosophical discussion.

Finally, there is a need for more mathematical investigation of the weaker forms of invariance. However, the results of Section~\ref{sec:weak} suggest that
weaker forms of invariance are insufficient to do justice to our intuitions about symmetry.\footnote{I am grateful to Alexander Meehan and Ian Slorach
for encouragement and discussion, to the participants of the Princeton/Rutgers foundations of probability working group for their many interesting comments,
and to two anonymous readers for a number of comments that have significantly improved this paper.}

\section*{Appendix: Construction of highly skewed weakly invariant probabilities}
To prove Propositions~\ref{prp:skew2} and \ref{prp:skew2p}, we use the methods of \cite{West20}.
Let $\scr B$ be the
(real) vector space of all bounded functions from $\Z$ to $\mathbb R$ (i.e., functions $f$ such that there is a real $M$ such that for all $x$ we have
$|f(x)| <M$).
Let $\scr M$ be the subset of $\scr B$ consisting of non-negative functions that are strictly positive at at least one point of $\Z$ and that are finitely
supported, i.e., are zero except at finitely many points. For two functions $f$
and $g$ in $\scr B$, define the convolution $f*g$ by:
$$
    (f*g)(x) = \sum_{y=-\infty}^\infty f(y)g(x-y),
$$
whenever this sum is defined. The convolution will always be defined and a member of $\scr B$ when one
of the functions is in $\scr B$ and the other is finitely supported. It is easy to check that convolution
is commutative on $\scr M$ (this uses the commutativity of $(\Z,+)$), and that
we have the associativity property $a*(\phi*\psi)=(a*\phi)*\psi$
whenever $a\in \scr B$ and $\phi,\psi\in \scr M$. Observe that if $\delta_x$ is the function on $\Z$ that is
zero except at $x\in \Z$ where it is equal to one, then $f*\delta_0 = f$.

Define the relation $\sim$ on $\scr B$ by $a\sim b$ if and only if $a * \phi = b * \psi$ for
some $\phi$ and $\psi$ in $\scr B$. Clearly, $\sim$ is reflexive and symmetric. It is also transitive.
For if $a * \phi = b * \psi$ and $b * \zeta = c * \eta$, then
$$
a*\phi*\zeta = b * \psi * \zeta = b*\zeta*\psi = c*\eta*\psi,
$$
and so $a\sim c$.

Say that a \textit{decent cone} is a subset $C$ of $\scr B$ such that:
\begin{enumerate}
    \item[(DC1)] if $a\sim b$, then $a\in C$ if and only if $b\in C$
    \item[(DC2)] if $a,b\in C$ and $\lambda,\mu \ge 0$, then $\lambda a + \mu b \in C$.
    \item[(DC3)] if $a$ is everywhere non-negative, then $a\in C$
    \item[(DC4)] if $a$ is everywhere non-positive, and not identically zero, then $a\not\in C$.
\end{enumerate}

Note that it follows from (DC1) and the fact that $\scr M$ contains a convolutional identity, namely $\delta_0$, that $C$ is closed under convolution with members of $\scr M$.

Let $-C = \{ -a : a\in C \}$.

\begin{lem}\label{lem:cone} Every decent cone $C_0$ can be extended to a decent cone $C\supseteq C_0$ such that
    $C\cup -C=\scr B$ and $C\cap -C = C_0\cap -C_0$.
\end{lem}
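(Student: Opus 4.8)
The plan is a Zorn's-lemma argument. Let $\scr P$ be the collection of all decent cones $C$ with $C_0\subseteq C$ and $C\cap -C = C_0\cap -C_0$, ordered by inclusion. I would first check that $C_0\in\scr P$ and that the union $C^{*}=\bigcup_i C_i$ of a nonempty chain $(C_i)$ in $\scr P$ again lies in $\scr P$: conditions (DC1)--(DC4) and the inclusion $C_0\subseteq C^{*}$ pass to the union at once, and the identity $C^{*}\cap -C^{*}=C_0\cap -C_0$ holds because if $g\in C^{*}$ and $-g\in C^{*}$ then, the chain being totally ordered, both belong to a common $C_i$, whence $g\in C_i\cap -C_i=C_0\cap -C_0$. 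So Zorn yields a maximal $C\in\scr P$, and the whole content of the lemma is to show $C\cup -C=\scr B$.

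Suppose not, and fix $f\in\scr B$ with $f\notin C$ and $-f\notin C$. For $\epsilon\in\{+1,-1\}$ I would let $C_{\epsilon f}$ consist of those $b\in\scr B$ for which there exist $a\in C$, $\chi\in\scr M\cup\{0\}$ and $\phi,\psi\in\scr M$ with $b*\psi = a*\phi + \epsilon\,(f*\chi)$. Using that $f\sim f*\chi$ for $\chi\in\scr M$, that $\scr M$ is closed under convolution, and the associativity and commutativity of $*$ on $\scr M$, one verifies that $C_{\epsilon f}$ satisfies (DC1)--(DC3), contains $C$ and $\epsilon f$, and is in fact the smallest subset of $\scr B$ containing $C\cup\{\epsilon f\}$ and satisfying (DC1)--(DC3); the only step worth doing carefully is closure under addition, and it is exactly that computation which forces the ``coefficient'' $\chi$ of $f$ to be allowed to range over all of $\scr M\cup\{0\}$ rather than over scalar multiples of $\delta_0$. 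Note also that (DC4) for $C_{\epsilon f}$ is automatic from $C_{\epsilon f}\cap -C_{\epsilon f}=C_0\cap -C_0$: an everywhere-non-positive nonzero $g\in C_{\epsilon f}$ has $-g\in C\subseteq C_{\epsilon f}$, so $g$ lies in the symmetric part, hence in $C$, contradicting (DC4) for $C$. Therefore $C_{\epsilon f}\in\scr P$ whenever $C_{\epsilon f}\cap -C_{\epsilon f}=C_0\cap -C_0$, and as $\epsilon f\in C_{\epsilon f}\setminus C$ this would contradict maximality of $C$; so for each $\epsilon$ the set $C_{\epsilon f}\cap -C_{\epsilon f}$ properly contains $C_0\cap -C_0$.

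The crux is the claim that \emph{if $C_{\epsilon f}\cap -C_{\epsilon f}$ properly contains $C_0\cap -C_0$, then $-\epsilon\,(f*\chi)\in C$ for some $\chi\in\scr M$.} To see it, pick $g$ in that set with $g\notin C$ (replacing $g$ by $-g$ if need be), and write $g*\psi_1 = a_1*\phi_1 + \epsilon\,(f*\chi_1)$ and $-g*\psi_2 = a_2*\phi_2 + \epsilon\,(f*\chi_2)$; here $\chi_1\neq 0$, since otherwise $g\sim a_1*\phi_1\in C$ and (DC1) would give $g\in C$. Convolving the first equation by $\psi_2$, the second by $\psi_1$, and adding, the $g$-terms cancel by commutativity of $*$ on $\scr M$, leaving $0 = \bigl(a_1*\phi_1*\psi_2 + a_2*\phi_2*\psi_1\bigr) + \epsilon\bigl(f*(\chi_1*\psi_2 + \chi_2*\psi_1)\bigr)$. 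The first summand lies in $C$ by (DC2) and closure of $C$ under convolution with $\scr M$, and $\chi:=\chi_1*\psi_2+\chi_2*\psi_1\in\scr M$ because $\chi_1*\psi_2\in\scr M$ and $\chi_2*\psi_1\in\scr M\cup\{0\}$; hence $-\epsilon\,(f*\chi)\in C$, proving the claim.

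Granting the claim, I obtain $\chi,\chi'\in\scr M$ with $-(f*\chi)\in C$ and $f*\chi'\in C$; convolving the first by $\chi'$, the second by $\chi$, and putting $\kappa=\chi*\chi'\in\scr M$ gives $f*\kappa\in C$, and since $f\sim f*\kappa$, (DC1) forces $f\in C$, contradicting the choice of $f$. Hence $C\cup -C=\scr B$, and $C\cap -C=C_0\cap -C_0$ because $C\in\scr P$, so $C$ is the required extension. I expect the main obstacle to be pinning down the exact description of $C_{\epsilon f}$ --- in particular the realization that $\chi$ must be allowed to range over $\scr M\cup\{0\}$ rather than over scalars, which is what makes $C_{\epsilon f}$ closed under addition --- together with the cancellation bookkeeping behind the claim, which is the step that converts the hypothesis ``$f$ cannot be separated from $-f$'' into the algebraic statement that some $f*\kappa$ lies in $C\cap -C$. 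The Zorn setup and the verification of (DC1)--(DC3) for $C_{\epsilon f}$ should be routine.
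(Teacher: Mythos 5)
Your proof is correct and takes essentially the same route as the paper: Zorn's Lemma reduces everything to a one-element extension, your $C_{\epsilon f}$ is exactly the paper's extension cone $C_1$ (with the same key observation that the coefficient of $f$ must range over $\scr M\cup\{0\}$), and the cancellation computation in your ``claim'' is the paper's computation for controlling the symmetric part. The only difference is cosmetic bookkeeping in the endgame: the paper adjoins a single $c\notin C_0\cup -C_0$ and shows directly that $C_1\cap -C_1=C_0\cap -C_0$ by deriving $c\in -C_0$ inside that same computation, whereas you adjoin $f$ and $-f$ separately and combine the two conclusions---indeed either one of your two cases alone already gives $f\in C$ or $-f\in C$ via (DC1), so the final convolution step is redundant but harmless.
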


I will call any such cone $C$ a \textit{completion} of $C_0$.

\begin{proof}[Proof of Lemma~\ref{lem:cone}]
By Zorn's Lemma, all we need to do is to suppose that $C_0$ is a decent cone such that
$C_0\cup -C_0 \subset \scr B$, and show there is a decent cone $C_1$ such that $C_0\subset C_1$
and $C_1\cap -C_1 = C_0\cap -C_0$.

To that end, fix $c\notin C_0\cup -C_0$. Let $C_1$ be the set of all functions $b$ such that
$b*\phi = c*\psi + d$ for some $d \in C_0$, $\phi\in \scr M$ and $\psi\in \scr M^* = \scr M\cup \{0\}$.
Since $0\in C_0$ by (DC3) and $c \in C_1$, we have $C_0 \subset C_1$.

For (DC1), suppose $b\in C_1$ is such that $b*\phi = c*\psi + d$, with $d\in C_0$,
$\phi\in \scr M$ and $\psi\in \scr M^*$,
and suppose that $b*\zeta = b'*\zeta'$ for $\zeta,\zeta'\in\scr M$.
Then $b'*\zeta'*\phi=b*\zeta*\phi=b*\phi*\zeta=c*\psi*\zeta+d*\zeta$,
using the commutativity and associativity properties of our convolution.
Since $C_0$ is closed under convolutions with members of $\scr M$, we have $d*\zeta\in C_0$, and $b'\in C_1$.

For (DC2), note that clearly $C_1$ is closed under multiplication by a non-negative scalar, so all
we need to do is to show that it is closed under addition.
Suppose $b*\phi = c*\psi + d$ and $b'*\phi' = c*\psi' + d'$, with $d,d'\in C_0$, $\phi,\phi'\in \scr M$
and $\psi,\psi'\in \scr M^*$.
Then:
\begin{align*}
    (b+b')*\phi*\phi' &= b*\phi*\phi' + b'*\phi'*\phi \\
        &= c*\psi*\phi'+d*\phi' + c*\psi'*\phi+d'*\phi \\
        &= c*(\psi*\phi'+\psi'*\phi)+(d*\phi'+d'*\phi),
\end{align*}
and so $b+b'$ is in $C_1$ as $C_0$ is closed under convolution with members of $\scr M$ and under addition.

Condition (DC3) holds for $C_1$ as it holds for $C_0$.

Next we need to show that $C_1\cap -C_1 \subseteq C_0\cap -C_0$  (the other inclusion is trivial). Suppose that $b \in C_1\cap-C_1$. Thus,
$b*\phi = c*\psi + d$ and $-b*\phi' = c*\psi' + d'$ for some
$d,d'\in C_0$, $\phi,\phi'\in\scr M$ and $\psi,\psi'\in\scr M^*$.
Then
$$
    c*\psi*\phi' = b*\phi*\phi' - d*\phi'
$$
and
$$
    c*\psi'*\phi = -b*\phi'*\phi - d'*\phi.
$$
Since $\phi*\phi'=\phi'*\phi$, adding these two equalities we get
$$
    c*(\psi*\phi'+\psi'*\phi) = -d*\phi' - d'*\phi.
$$
If at least one of $\psi$ and $\psi'$ is not identically zero, it follows that $-c\sim d*\phi'+d'*\phi$, and
hence that $c \in -C_0$, contrary to our assumptions. If both $\psi$ and $\psi'$ are identically zero, then
it follows $d$ and $d'$ are identically zero. In that case $b*\phi = d$ and $-b*\phi' = d'$, so $b$ and $-b$
are both members of $C_0$, and hence $b\in C_0\cap -C_0$.

Finally, suppose that $a$ is non-positive but not identically zero. Then $-a\in C_1$ by (DC3). Hence if $a\in C_1$,
we have $a\in C_1\cap -C_1 = C_0\cap -C_0$, which contradicts the fact that $C_0$ satisfies (DC4).
\end{proof}

\begin{lem}\label{lem:cone2} Suppose that $C$ is a subset of $\scr B$ that is
    closed under right convolution with members of $\scr M$ (i.e., if $c\in C$ and $\psi\in\scr M$,
    then $c*\psi\in C$), is closed under addition, and satisfies (DC3) and (DC4).
    Then $C^* = \{ c \in \scr B : \exists \phi \in \scr M(c*\phi \in C) \}$ is a decent cone.
\end{lem}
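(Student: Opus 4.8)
The plan is to verify the four defining conditions (DC1)--(DC4) for $C^*$, after recording two preliminary facts. First, $\scr M$ is closed under convolution (noted above), and for every real $\lambda>0$ the function $\lambda\delta_0$ is non-negative, finitely supported and positive at $0$, hence lies in $\scr M$; since $c*(\lambda\delta_0)=\lambda c$, the hypothesis that $C$ is closed under right convolution with $\scr M$ already gives that $C$ is closed under multiplication by positive scalars, and applying (DC3) to the zero function covers $\lambda=0$, so $C$ is closed under multiplication by non-negative scalars. Second, $C\subseteq C^*$ because $c*\delta_0=c$ with $\delta_0\in\scr M$; in particular (DC3) for $C^*$ is immediate from (DC3) for $C$.

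For (DC1), since $\sim$ is symmetric it suffices to show that $a\sim b$ and $a\in C^*$ force $b\in C^*$. Write $a*\phi\in C$ with $\phi\in\scr M$ and $a*\zeta=b*\zeta'$ with $\zeta,\zeta'\in\scr M$. Using commutativity on $\scr M$ together with the associativity identity $a*(\phi*\psi)=(a*\phi)*\psi$, one computes $b*(\zeta'*\phi)=(b*\zeta')*\phi=(a*\zeta)*\phi=(a*\phi)*\zeta$. The right-hand side lies in $C$ because $a*\phi\in C$, $\zeta\in\scr M$, and $C$ is closed under right convolution with $\scr M$; and $\zeta'*\phi\in\scr M$, so $b\in C^*$.

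For (DC2), closure of $C^*$ under non-negative scalars follows from the first preliminary fact, since $(\lambda a)*\phi=\lambda(a*\phi)\in C$ whenever $a*\phi\in C$ and $\lambda\ge 0$. For closure under addition, take $a*\phi\in C$ and $b*\psi\in C$ with $\phi,\psi\in\scr M$; then $(a+b)*(\phi*\psi)=(a*\phi)*\psi+(b*\psi)*\phi$, each summand lies in $C$ by closure under right convolution with $\scr M$, hence so does the sum by closure under addition, and $\phi*\psi\in\scr M$, so $a+b\in C^*$. For (DC4), suppose $a$ is everywhere non-positive and not identically zero and, toward a contradiction, that $a\in C^*$, say $a*\phi\in C$ with $\phi\in\scr M$. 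Every term $a(y)\phi(x-y)$ in $(a*\phi)(x)=\sum_y a(y)\phi(x-y)$ is non-positive, so $a*\phi\le 0$ everywhere; choosing $y_0$ with $a(y_0)<0$ and $z_0$ with $\phi(z_0)>0$ gives $(a*\phi)(y_0+z_0)\le a(y_0)\phi(z_0)<0$, so $a*\phi$ is not identically zero. Then (DC4) for $C$ forces $a*\phi\notin C$, a contradiction.

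I do not expect a genuine obstacle here; the whole argument is bookkeeping. The two points needing a little care are: (i) in (DC1) and in the additive half of (DC2) one must arrange the convolutions so that only \emph{right} convolution with $\scr M$ is ever invoked --- we are not given two-sided closure --- which is exactly why $b*(\zeta'*\phi)$ and $(a+b)*(\phi*\psi)$, rather than some other products, are the quantities to examine; and (ii) the small observation that closure of $C$ under right convolution with $\scr M$ already yields closure of $C$ under positive scalar multiplication via the functions $\lambda\delta_0$, which is what makes (DC2)'s scalar clause free.
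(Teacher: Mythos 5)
Your proof is correct and follows essentially the same route as the paper's: verifying (DC1)--(DC4) directly, using $\lambda\delta_0$ to get scalar closure, the symmetrized convolution $(a+b)*(\phi*\psi)$ for additivity, and the observation that $a*\phi$ inherits non-positivity for (DC4). The only differences are cosmetic (variable names in (DC1) and a more explicit justification that $a*\phi$ is not identically zero).
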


\begin{proof}[Proof of Lemma~\ref{lem:cone2}]
    To check condition (DC1), suppose $c*\psi = c'*\psi'$ for $c\in\scr B$, $c'\in C^*$ and $\psi,\psi'\in \scr M$.
    Fix $\phi\in\scr M$ such that $c'*\phi\in C$. Then $c*\psi*\phi = c'*\psi'*\phi=c'*\phi*\psi'$.
    Since $C$ is closed under right convolution with members of $\scr M$, we have $c'*\phi*\psi'\in C$,
    and so $c\in C^*$ as desired.

    Note that $C$ is closed under multiplication by non-negative scalars
    since multiplication by a positive scalar $\lambda$ is just convolution with $\lambda\delta_0$,
    while $0\in C$. It
    follows that $C^*$ is closed under multiplication by non-negative scalars.
    To check (DC2), we need only check that $C^*$ is closed under addition.
    Suppose $a,b\in \scr B$,  $a',b'\in  C$ and $\phi,\phi',\psi,\psi'\in \scr M$ are such that
$
    a * \phi = a' * \phi'
$
    and
$
    b * \psi = b' * \psi'.
$
    Then:
\begin{equation*}
\begin{split}
    (a + b) * \phi*\psi &= a*\phi*\psi + b * \phi*\psi \\
            &= a'*\phi'*\psi + b * \psi*\phi \\
            &= a'*\phi'*\psi + b'*\psi'*\phi.
\end{split}
\end{equation*}
    The right-hand-side is in $C$, so $a+b$ must be in $C^*$.

    That $C^*$ satisfies (DC3) follows from the fact that $C\subseteq C^*$ as $a*\delta_0 = a$.
    It remains to show that (DC4) is satisfied. Suppose $a \in C^*$ is non-positive but not identically zero.
    Then $a*\phi \in C$ for some $\phi\in\scr M$. But $a*\phi$ will also be non-positive but not identically zero,
    contradicting the fact that $C$ satisfies (DC4).
\end{proof}

Let $1_A$ be the indicator function of a set $A$, i.e., the function that is $1$ on $A$ and $0$ outside $A$.

\begin{lem}\label{lem:cone3} Let $C$ be a decent cone such that $C\cup -C=\scr B$.
    Stipulate that $A\lessapprox B$ just in case
    $1_B-1_A \in C$. Then $\lessapprox$ is a total regular qualitative $\Z$-invariant probability.
\end{lem}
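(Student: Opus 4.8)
The plan is to verify, one clause at a time, each of the defining properties of a regular total $\Z$-invariant qualitative probability, observing that every clause reduces directly to one of the cone conditions (DC1)--(DC4) together with the closure of a decent cone under convolution with members of $\scr M$. Before starting I would record the elementary dictionary that does all the work: for disjoint $A,E\subseteq\Z$ one has $1_{A\cup E}=1_A+1_E$; for any $n\in\Z$ one has $1_{n+A}=\delta_n*1_A$ (since $(\delta_n*f)(x)=f(x-n)$); and $-(1_B-1_A)=1_A-1_B$. Note also that $0\in C$ by (DC3), the zero function being everywhere non-negative.

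With this in hand the verifications are short. Reflexivity is just $1_A-1_A=0\in C$; transitivity follows from (DC2), since if $1_B-1_A\in C$ and $1_D-1_B\in C$ then their sum $1_D-1_A$ lies in $C$. Totality uses $C\cup -C=\scr B$: the function $1_B-1_A$ lies in $C$, giving $A\lessapprox B$, or else in $-C$, in which case $1_A-1_B\in C$ and $B\lessapprox A$. Non-negativity is immediate from (DC3), because $1_A-1_\varnothing=1_A\ge 0$. Additivity is essentially trivial in this encoding: if $A\cap E=B\cap E=\varnothing$ then $1_{B\cup E}-1_{A\cup E}=(1_B+1_E)-(1_A+1_E)=1_B-1_A$, so the membership statement defining $A\cup E\lessapprox B\cup E$ is literally the same as the one defining $A\lessapprox B$. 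Regularity is the one place where (DC4) is needed: for $A\ne\varnothing$ we always have $\varnothing\lessapprox A$, and if we also had $A\lessapprox\varnothing$ then $-1_A\in C$; but $-1_A$ is everywhere non-positive and, since $A\ne\varnothing$, not identically zero, contradicting (DC4). Hence $\varnothing<A$.

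Finally, for weak $\Z$-invariance I would use that $1_{n+B}-1_{n+A}=\delta_n*(1_B-1_A)$, and that a decent cone is closed under convolution with members of $\scr M$ (by the remark following (DC4), and $\delta_n\in\scr M$); so $A\lessapprox B$ implies $n+A\lessapprox n+B$, and applying this with $-n$ in place of $n$ yields the converse implication. I do not expect a genuine obstacle in this lemma; the only points that call for care are the sign convention in $A\lessapprox B\iff 1_B-1_A\in C$ (so that non-negativity invokes (DC3) while regularity invokes (DC4), and not vice versa) and the fact that one should claim only \emph{weak} invariance here, since strong invariance would require $1_{n+A}-1_A\in C\cap -C$, which the hypotheses do not supply (and which Theorem~\ref{th:main} shows is impossible on $\Z$ anyway).
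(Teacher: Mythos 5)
Your proof is correct and follows essentially the same route as the paper's: reflexivity from $0\in C$, transitivity and additivity from (DC2) and the indicator-function identities, totality from $C\cup -C=\scr B$, non-negativity from (DC3), regularity from (DC4), and weak $\Z$-invariance from closure of a decent cone under convolution with $\delta_n\in\scr M$ (which the paper likewise derives from (DC1)). Your closing remark that only weak invariance is being claimed is also consistent with the paper's intended reading of ``$\Z$-invariant'' here.
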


\begin{proof}[Proof of Lemma~\ref{lem:cone3}]
Reflexivity of $\lessapprox$ follows from the fact that $0\in C$. Transitivity follows
immediately from the fact that a decent cone is closed under addition.
Additivity follows from the fact that $1_B-1_A = 1_{B-A}-1_{A-B}$.
Since every non-negative function in $\scr B$ is a member of $C_0\subseteq C$, it follows
that $\varnothing\lessapprox A$ for all $A$.

To prove regularity, suppose $A$ is nonempty. If $A\lessapprox\varnothing$, then $1_\varnothing - 1_A = -1_A$ is in $C$,
contradicting (DC4).

Totality follows from the fact that $C\cup -C = \scr B$.

Observe
that $(1_B - 1_A) * \delta_x = 1_{x+B} - 1_{x+A}$. But a decent cone is invariant under right convolution
with members of $\scr M$ by (DC1), so $x+A \lessapprox x+B$ if and only if $A\lessapprox B$, so we have
$\Z$-invariance.
\end{proof}

West's result on the existence of a total regular qualitative $\Z$-invariant probability then follows
by letting $C_0$ be the collection of all functions in $\scr B$ that are everywhere non-negative,
letting $C$ be the completion of $C_0^*$, and applying the above lemmas.

Now say that a function $a\in\scr B$ has property $X_1$ provided that it is negative only in finitely
many places and that $\sum a=\sum_{n=-\infty}^\infty a_n\ge 0$. Say it has property $X_2$ provided
that for infinitely many $n>0$ we have $a(n)>0$ and there are only finitely many $n>0$ such that $a(n)<0$.
The sum of two functions with property $X_i$ has property $X_i$, for $i=1,2$, and the sum of a function
with $X_1$ and a function with property $X_2$ has property $X_2$. Having property $X_i$ is closed under
right-convolution with a member of $\scr M$: in the case of $X_2$, this uses the fact that
$\sum (a*\phi)=(\sum a)(\sum \phi)$ if $a$ is negative in only finitely many places and $\phi\in\scr M$.

Say that a function has property $X$ provided it has $X_1$ or $X_2$. Having property $X$ is thus closed under
addition and right-convolution with $\scr M$.

\begin{proof}[Proof of Proposition~\ref{prp:skew2}]
    Let $C_0$ be the set of functions $a\in\scr B$ that have property $X$.
    Note that any non-negative function has property $X_1$ and hence is in $C_0$,
    and the only non-positive function that can have $X$ is zero.

    Let $C$ be a completion of $C_0^*$. Define $\lessapprox$ as in Lemma~\ref{lem:cone3}. This will be a regular total
    $G$-invariant qualitative probability.

    Let's examine $C_0^* \cap -C_0^*$. Suppose $a \in C_0^* \cap -C_0^*$. Then
    $a*\phi$ and $-a*\psi$ have property $X$ for some $\phi,\psi\in\scr M$. Hence, so do
    $a*\phi*\psi$ and $-a*\psi*\phi$ since property $X$ is closed under right $\scr M$ convolution.
    Note that $\phi*\psi=\psi*\phi$. Let $b=a*\phi*\psi$. Then $b$ and $-b$ both have property $X$. There are three cases to consider: both functions have $X_1$,
    both have $X_2$, and one has $X_1$ while the other has $X_2$. It is clearly impossible
    that both $b$ and $-b$ have $X_2$. And if a function has $X_1$, then its negation is
    positive in only finitely many places, and so that negation cannot have $X_2$. Thus, the
    remaining case is where both functions have $X_1$. The only way this can be is if both
    functions sum to zero and are finitely supported. But if $a*\phi*\psi$ is finitely
    supported, so is $a$, and if one sums to zero, so does the other since $\sum a = (\sum a)(\sum \phi)(\sum \psi)$.
    Thus, any function in $C_0^*\cap -C_0^*$ is finitely supported and sums to zero. Conversely, any finitely
    supported function that sums to zero has $X_1$ and so does its negation.

    So, $C_0^*\cap -C_0^*=C\cap -C$ is the set of all finitely supported functions that sum to zero.

    Suppose $B$ has infinitely many positive members and $A$ does not. Then $1_B-1_A$ has property $X_2$,
    and hence is in $C$, and so $A\lessapprox B$. If we also had $B\lessapprox A$, we would have
    $1_B-1_A$ in $C\cap -C$, which would require $1_B-1_A$ to be finitely supported, which it's not.
    Thus, $A<B$.

    Finally, suppose $A$ and $B$ are finite and of the same cardinality. Then $1_B-1_A$ and $1_A-1_B$ both
    have $X_1$, and so $A\approx B$.
\end{proof}

Now suppose $C$ is a decent cone such that $C\cup -C=\scr B$.
For any $a,b\in\scr B$, define $a\lessapprox b$ provided that $b-a\in C$. This is a total vector space preorder on $\scr B$
(i.e., it's a total preorder such that if $a\lessapprox b$, $\lambda\ge 0$ and $c$ in the vector space, then $\lambda a + c \le \lambda b + c$).

We now need a useful lemma about total vector space preorders.
Let
$$
    \gamma(a,b) = \inf \{ \alpha \in \R : a \lessapprox \alpha b \}
$$
if $b\not\approx 0$. If $b\approx 0$ and $a\not\approx 0$, let $\gamma(a,b)=\infty$. And make $\gamma(a,b)$ be undefined
if $a\approx 0$ and $b\approx 0$.

\begin{lem}\label{lem:gamma}
    If $c\not\approx 0$, then $\gamma(a+b,c)=\gamma(a,c)+\gamma(b,c)$. Moreover, if
    $0\lessapprox a,b,c$, then
$$
    \gamma(a,c) = \gamma(a,b)\gamma(b,c)
$$
    whenever the right-hand-side is well-defined. If $b\not\approx 0$, then $\gamma(b,b)=1$. Finally, if $0\lessapprox a$ and $0<b$, then $0\le \gamma(a,b)$.
\end{lem}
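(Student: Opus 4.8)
The plan is to reduce all four assertions to one structural fact: a two-sided, Dedekind-cut description of $\gamma$. First I would show that for a ``denominator'' $b$ with $0<b$ --- the regime in which the lemma is applied, and the only one in which the defining infimum behaves well --- and every $a\in\scr B$,
$$\gamma(a,b)=\inf\{\alpha\in\R: a\lessapprox\alpha b\}=\sup\{\alpha\in\R:\alpha b\lessapprox a\}.$$
The point is that $\alpha\mapsto\alpha b$ is $\lessapprox$-monotone: if $\alpha\le\alpha'$ then, scaling $0\lessapprox b$ by the nonnegative scalar $\alpha'-\alpha$, we get $0\lessapprox(\alpha'-\alpha)b$, i.e.\ $\alpha b\lessapprox\alpha' b$. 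Hence $U:=\{\alpha:a\lessapprox\alpha b\}$ is an up-ray and $D:=\{\alpha:\alpha b\lessapprox a\}$ is a down-ray; transitivity gives $\sup D\le\inf U$, and totality gives $D\cup U=\R$, which forces $\sup D\ge\inf U$. So the two coincide and equal $\gamma(a,b)$, the degenerate values $\pm\infty$ (where $D$ or $U$ is all of $\R$ or empty) being covered by the usual conventions.

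Two of the assertions then drop out at once. Since $\alpha b\lessapprox b\iff 0\lessapprox(1-\alpha)b\iff\alpha\le 1$, the set $D$ for the pair $(b,b)$ is $(-\infty,1]$, so $\gamma(b,b)=\sup D=1$; and if $0\lessapprox a$ then $0=0\cdot b\lessapprox a$, so $0\in D$ and $\gamma(a,b)=\sup D\ge 0$. For the additivity assertion I would get ``$\le$'' from the $\inf$-side and ``$\ge$'' from the $\sup$-side: if $a\lessapprox\alpha c$ and $b\lessapprox\beta c$ then $a+b\lessapprox(\alpha+\beta)c$ (add $b$ to the first relation, $\alpha c$ to the second, and use transitivity), so $\gamma(a+b,c)\le\gamma(a,c)+\gamma(b,c)$; dually $\alpha c\lessapprox a$ and $\beta c\lessapprox b$ yield $(\alpha+\beta)c\lessapprox a+b$, so $\gamma(a+b,c)\ge\gamma(a,c)+\gamma(b,c)$.

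For the multiplicative assertion, fix $a,b,c$ with $0\lessapprox a$ and $0<b,\ 0<c$, and assume $\gamma(a,b)\gamma(b,c)$ is defined; by non-negativity both factors lie in $[0,\infty]$, so this only rules out the $0\cdot\infty$ cases. When both factors lie in $(0,\infty)$: for $0\le\alpha<\gamma(a,b)$ and $0\le\beta<\gamma(b,c)$ we have $\alpha b\lessapprox a$ and $\beta c\lessapprox b$, and scaling the second by $\alpha\ge 0$ gives $\alpha\beta c\lessapprox\alpha b\lessapprox a$, so $\gamma(a,c)\ge\alpha\beta$; taking the supremum over such $\alpha,\beta$ gives $\gamma(a,c)\ge\gamma(a,b)\gamma(b,c)$. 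Symmetrically, $\alpha>\gamma(a,b)$ and $\beta>\gamma(b,c)$ give $a\lessapprox\alpha b\lessapprox\alpha\beta c$, hence $\gamma(a,c)\le\alpha\beta$, and the infimum yields the reverse inequality. The boundary cases (some $\gamma$ equal to $0$, or both equal to $\infty$) I would dispatch by short direct arguments of the same flavour; for instance if $\gamma(b,c)=0$ then $b\lessapprox\varepsilon c$ for every $\varepsilon>0$, so $a\lessapprox\eta b\lessapprox\eta\varepsilon c$ for any $\eta>\gamma(a,b)$, forcing $\gamma(a,c)=0$.

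The one thing to be careful about is bookkeeping, not ideas: one must keep the ``denominator'' argument in the regime $0<b$ (outside it the infimum defining $\gamma$ collapses to $\pm\infty$ and the cut picture fails), and one must watch the extended arithmetic --- most sharply, the additivity assertion nominally risks $\infty+(-\infty)$, which I would neutralize by reading it, like the multiplicative assertion, under the proviso that the right-hand side is defined (it is in any case harmless when $0<c$ and $0\lessapprox a,b$, since then both summands are $\ge 0$ by non-negativity). Getting the cut characterization right, including its $\pm\infty$ corners, is the step I expect to require the most care; once it is in place the four assertions follow by the manipulations above.
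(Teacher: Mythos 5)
Your proposal is correct and follows essentially the same route as the paper: your two-sided cut characterization $\inf\{\alpha : a\lessapprox\alpha b\}=\sup\{\alpha : \alpha b\lessapprox a\}$ is exactly the paper's identity $\gamma^+=\gamma^-$ (obtained there via subadditivity of $\gamma^+$ plus totality), and additivity, the product rule, $\gamma(b,b)=1$ and non-negativity are then extracted by the same one-inequality-from-each-side squeezes and the same case analysis. The only (harmless) divergence is that you keep the denominator in the regime $0<b$ throughout and so do not separately treat the $b\approx 0$ or $c\approx 0$ corners of the product rule, which the paper dispatches in a few lines directly from the definitional conventions for $\gamma$; since the lemma is only applied to indicator functions, where regularity gives $0<1_B$ for nonempty $B$, nothing is lost.
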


\begin{proof}[Proof of Proposition~\ref{prp:skew2p}]
    Let $C$ be as in the proof of Proposition~\ref{prp:skew2}. By abuse of notation, use $\lessapprox$ both for the
    qualitative probability in that proof and for our vector space preorder. We then have  $A\lessapprox B$
    if and only if $1_A\lessapprox 1_B$. It follows from the non-negativity and regularity of the qualitative
    probability that $0\lessapprox 1_A$ for every $A$, with the inequality being strict if $A$ is nonempty.

    Define $c(A,B)=\gamma(1_A,1_B)$. It follows from Lemma~\ref{lem:gamma} that this is a coherent exchange rate.
    Then $P_c$ will be a full conditional probability.
    Moreover, $\gamma(a*\phi,b*\phi)=\gamma(a,b)$ for all $\phi\in\scr M$ by (DC1). Letting $\phi=\delta_x$, we see that
    $c$ satisfies the weak $\Z$-invariance condition $c(x+A,x+B)=c(A,B)$. It follows that $P=P_c$ is weakly
    $\Z$-invariant by Lemma~\ref{lem:cP-inv}.

    Next, suppose $m$ and $n$ are distinct integers (the case $m=n$ is trivial).
    Let $a_\alpha = 1_{\{m\}} - \alpha \cdot 1_{\{m,n\}}$. If $\alpha>1/2$, then $-a_\alpha$
    has property $X_1$. It follows that $-a_\alpha \in C$, so $1_{\{m\}} \lessapprox \alpha  \cdot1_{\{m,n\}}$.
    Thus, $\gamma(1_{\{m\}},1_{\{m,n\}}) \le 1/2$. On the other hand, if $\alpha<1/2$, then
    $a_\alpha$ has property $X_1$. It follows that $a_\alpha\in C$. The only way we could have
    $-a_\alpha$ in $C$ as well is if $a_\alpha\in C\cap -C$, which according to the proof of Proposition~\ref{prp:skew2}
    would require that $a_\alpha$ sum to zero, which it does not for $\alpha<1/2$. So, we do not have
    $1_{\{m\}} \lessapprox \alpha \cdot 1_{\{m,n\}}$, and hence $\gamma(1_{\{m\}},1_{\{m,n\}})\ge 1/2$. Thus,
    $\gamma(1_{\{m\}},1_{\{m,n\}})=1/2$, and it follows that $P(\{m\}\mid \{m,n\})=1/2$. Swapping $m$ and $n$ we get
    that $P(\{m\}\mid \{m,n\})=P(\{n\}\mid \{m,n\})$.

    Now, suppose that $A$ has only finitely many positive integers and $B$ has infinitely many. Then
    $P(A\mid A\cup B)=c(A,A\cup B)=\gamma(1_A,1_{A\cup B})$. Fix any $\alpha>0$. Then $\alpha\cdot 1_{A\cup B}-1_A$
    has property $X_2$, and hence is in $C$, so $1_A \lessapprox \alpha\cdot 1_{A\cup B}$. Thus, $\gamma(1_A,1_{A\cup B}) \le 0$,
    and by Lemma~\ref{lem:gamma} we have $\gamma(1_A,1_{A\cup B})=0$.
    Thus, $P(A\mid A\cup B)=0$, and so we must have $P(B\mid A\cup B)=1$ by finite additivity.
\end{proof}

\begin{proof}[Proof of Lemma~\ref{lem:gamma}]
For convenience, write:
$$
    \gamma^+(a,b) = \gamma(a,b)
$$
and
$$
    \gamma^-(a,b) = \sup \{ \alpha \in R : \alpha b \lessapprox a \},
$$
whenever $b\not\approx 0$, with the expressions undefined otherwise.
Observe that $a\lessapprox\alpha b$ if and only if $(-\alpha) b\lessapprox -a$, so we have the duality
$$
    \gamma^+(a,b) = -\gamma^-(-a,b).
$$
It is very easy to see that $\gamma^+(\cdot,b)$ is subadditive:
$$
    \gamma^+(a+a',b)\le\gamma^+(a,b)+\gamma^+(a',b)
$$
and that $\gamma^-(\cdot,b)$ is superadditive:
$$
    \gamma^-(a+a',b)\ge\gamma^-(a,b)+\gamma^-(a',b).
$$

In particular, if $b\not\approx 0$, we have $0=\gamma^+(0,b)\le\gamma^+(a,b)+\gamma^+(-a,b)$ for all $a$.
Hence, $-\gamma^+(-a,b)\le\gamma^+(a,b)$. Suppose $-\gamma^+(-a,b)<\gamma^+(a,b)$. Choose $\alpha \in (-\gamma^+(-a,b),\gamma^+(a,b))$.
Since $\alpha<\gamma^+(a,b)$, we do not have $a\lessapprox \alpha b$, and hence we have $\alpha b < a$ by totality. Thus,
we have $-a < -\alpha b$, so $\gamma^+(-a,b)\le -\alpha$, or $\alpha \le -\gamma^+(-a,b)$, a contradiction to the choice of $\alpha$. Hence,
$$
    \gamma^+(-a,b)=-\gamma^+(a,b),
$$
as long as $b\not\approx 0$. The same is true for $\gamma^-$ by the earlier proved duality between $\gamma^+$ and $\gamma^-$.
Therefore,
$$
    \gamma^+(a,b)=-\gamma^+(-a,b)=\gamma^-(a,b),
$$
if $b\not\approx 0$. It follows that if $b\not\approx 0$, then $\gamma(\cdot,b)$ is both subadditive and superadditive, and hence it is additive.

Now, if $0\lessapprox a$ and $0<b$, then $\gamma(a,b)=\gamma^+(a,b)=\gamma^-(a,b)$. If $0<a$ and $0\approx b$, then
$\gamma(a,b)=\infty$. And if $0\approx a$ and $0\approx b$, then $\gamma(a,b)$ is undefined.

I claim that if $0\lessapprox a,b,c$, and $\gamma(a,b)\gamma(b,c)$ is defined, then $\gamma(a,c)=\gamma(a,b)\gamma(b,c)$.

To see this, consider first the case where $0<b$ and $0<c$. Then if $a \lessapprox \alpha b$ and $b\lessapprox \beta c$, we
have $a\lessapprox \alpha\beta c$. It follows that $\gamma^+(a,c) \le \gamma^+(a,b)\gamma^+(b,c)$. Moreover, if
$\alpha b \lessapprox a$ and $\beta c\lessapprox b$, then $\alpha\beta c \lessapprox a$, and so $\gamma^-(a,c) \ge \gamma^-(a,b)\gamma^-(b,c)$.
But since $\gamma^+(u,v)=\gamma^-(u,v)$ whenever $0<v$, we thus have  $\gamma(a,c)=\gamma(a,b)\gamma(b,c)$.

Next, consider the case where $0\approx c$. Then $\gamma(b,c)$ is either undefined or equal to infinity. If it is undefined,
we don't need to prove anything. So suppose it's equal to infinity. For $\gamma(a,b)\gamma(b,c)$ to be defined, we must
have $\gamma(a,b)$ defined and strictly positive. This will happen only if $0<a$. But in that case $\gamma(a,c)=\infty$, and
so we have $\gamma(a,c)=\infty=\gamma(a,b)\gamma(b,c)$.

For the last case, suppose that $0<c$ but $0\approx b$. For $\gamma(a,b)$ to be defined, we must have $0<a$.
In that case $\gamma(a,b)=\infty$. For the product $\gamma(a,b)\gamma(b,c)$ to be defined, we must have $0<\gamma(b,c)$,
which is impossible if $b\approx 0$.

Next, suppose $0\not\approx b$. Then $\gamma^+(b,b) \le 1$ and $\gamma^-(b,b) \ge 1$ by reflexivity of $\lessapprox$, hence
$\gamma(b,b)=\gamma^+(b,b)=\gamma^-(b,b)=1$.

Finally, if $0\lessapprox a$ and $0<b$, then $0\le \gamma^-(a,b)=\gamma(a,b)$.
\end{proof}

\end{document}